\def\constr#1^#2{\mathrel{\mathop{\kern 0pt#1}\limits^{#2}}}
\def\build#1_#2{\mathrel{\mathop{\kern 0pt#1}\limits_{#2}}}
\newtheorem{theorem}{Theorem}[section]
\newtheorem{definition}{Definition}[section]
\newtheorem{proposition}{Proposition}[section]
\newtheorem{lemma}{Lemma}[section]
\newtheorem{remark}{Remark}[section]
\newtheorem{example}{Example}[section]
\numberwithin{equation}{section}
\def\fnt#1#2{\footnotetext{\kern-.3em
    {$^{\mbox{\scriptsize #1}}$}{#2}}}
\newcommand{\N}{\mathcal{N}}
\chardef\bslchar=`\\ % p. 424, TeXbook
\newcommand{\addbslash}{\expandafter\@addbslash\string}
\def\@addbslash#1{\bslchar\@nobslash#1}
\newcommand{\nobslash}{\expandafter\@nobslash\string}
\def\@nobslash#1{\ifnum`#1=\bslchar\else#1\fi}
\newcommand{\ntt}{\normalfont\ttfamily}
\def\@boxorbreak{\leavevmode
  \ifmmode\hbox\else\ifdim\lastskip=\z@\penalty9999 \fi\fi}
\DeclareRobustCommand{\cs}[1]{\@boxorbreak{\ntt\addbslash#1\@empty}}
\makeatother \pagestyle{myheadings}\frenchspacing
\title{\bf GENERALIZATIONS OF KAPLANSKY THEOREM FOR SOME $(p,k)$-QUASI-HYPONORMAL OPERATORS  }
\author{  Abdelkader Benali $^{[1]}$ and Ould Ahmed Mahmoud Sid Ahmed
$^{[2]}$\\
\\
$^{[1]}$ Mathematics Department,Faculty of science,University of
Hassiba\\ Benbouali,
 Chlef Algeria. B.P. 151 Hay Essalem, chlef 02000, Algeria.
 \\benali4848@gmail.com\\
\noindent $^{[2]}$ Mathematics Department, College of Science.
Aljouf University\\Aljouf 2014. Saudi Arabia
 \\
 sidahmed@ju.edu.sa
}
\begin{document}

\maketitle\
%\begin{document}
\maketitle \
\begin{abstract}
In the present paper, we generalized some notions of bounded
operators to unbounded operators on Hilbert space such as
$k$-quasihyponormal and $k$-paranormal unbounded operators.
Furthermore, we extend the Kaplansky theorem for normal operators to
some $(p,k)$-quasihyponormal operators. Namely the
$(p,k)$-quasihyponormality of the product $AB$ and $BA$ for two
operators.
\end{abstract}

\maketitle {\bf Keywords.} Unbounded operator,normal operator,
$(k,p)$-quasihyponormal operator,$k$-paranormal operator.

{\bf Mathematics Subject Classification (2010).} Primary 47B15,
Secondary 46L10.

\begin{center}
\section{INTRODUCTION}
\end{center}
Through out the paper we denote Hilbert space over the field of
complex numbers $\mathbb{C}$ by $\mathcal{H}$ and the usual inner
product and the corresponding norm of $\mathcal{H}$ are denoted by
$\langle\; ,\; \rangle$ and $\|\;.\;\|$ respectively. Let us fix
some more notations.We write $\mathcal{B}(\mathcal{H})$
 for the set of all bounded linear operators in $\mathcal{H}$ whose domain are
equal to$\mathcal{H}$ .
 For an
operator $A \in \mathcal{B}(\mathcal{H}) $ ,the
 range, the kernel  and the adjoint of $A$ are
denoted by $\mathcal{R}(A)$,\;$\mathcal{N}(A)$ and $A^*$
respectively.
  If $\mathcal{M}$ is a
subspace of $\mathcal{H}$, $\overline{\mathcal{M}}$ and
$\mathcal{M}^\bot$ denote its closure and its orthogonal complement,
respectively and $A|\mathcal{M}$ denotes the restriction of $A$ to
$\mathcal{M}$.
\par \vskip 0.2 cm \noindent In this  section we introduce basic notations and recall some well-know concepts
of some classes of bounded and  unbounded operators in a Hilbert
space.\par\vskip 0.2 cm \noindent An operator $A \in
\mathcal{B}(\mathcal{H})$  is said to
 be:
normal if $A^*A=AA^*$ \big(  equivalently $\|Ax\|=\|A^*x\|$ for all
$x\in \mathcal{H}$\big),hyponormal if $A^*A\geq AA^*$ \big(
equivalently $\|Ax\|\geq\|A^*x\|$ for all $x\in \mathcal{H}\big),$
Co-hyponormal if $AA^*\geq A^*A$ \big(  equivalently
$\|A^*x\|\geq\|Ax\|$ for all $x\in \mathcal{H}\big),$ quasinormal if
$AA^*A=A^*A^2$ \big(  equivalently if $(A^*A)^2=A^{*2}A^2$\big) and
Quasi-hyponormal if $A^{*2}A^2 \geq (A^*A)^2$ \big( equivalently
$\|A^{2}x\|\geq \|A^*Ax\|$ for all $x\in
\mathcal{H}$\big).\par\vskip 0.2 cm \noindent An operator
$A:\mathcal{D}(A)\subset \mathcal{H} \longrightarrow \mathcal{H}$ is
said to be densely defined if $\mathcal{D}(A)$ (the domaine of $A$)
is dense in $\mathcal{H}$ and it is said to be  closed if its graph
is closed.
 The (Hilbert) adjoint of $A$ is denoted by $A^*$ and it is known
to be unique if A is densely defined.
 \par\vskip 0.2cm
\noindent We denote by ${Op}(\mathcal{H})$ the set of unbounded
densely defined linear operators on $\mathcal{H}$.\par \vskip 0.2 cm
\noindent Let $A,B\in {Op}(\mathcal{H})$, the product $AB$ of two
unbounded operators is defined by $$(AB)x =
A(Bx)\;\;\hbox{on}\;\;\mathcal{D}(AB) =\{\;x \in \mathcal{D}(B) : Bx
\in \mathcal{D}(A)\;\;\}.$$
 Let $A,B\in
{Op}(\mathcal{H})$ , we recall that $B$ is called an extension of
$A$, denoted by $A\subseteq B,$ if $\mathcal{D}(A)\subset
\mathcal{D}(B)$  and $Ax=Bx$ for all $x \in \mathcal{D}(A)$. An
closed operator $A \in {Op}(\mathcal{H})$ is said to commute with
$B\in \mathcal{B}(\mathcal{H})$ if $BA\subseteq AB$ , that is, if
for $x \in \mathcal{D}(A)$, we have $Bx \in \mathcal{D}(A)$ and
$BAx=ABx$.\par \vskip 0.2 cm \noindent Let $A\in
\mathcal{B}(\mathcal{H})$ and $ B\in Op(\mathcal{H})$ we say that
$A$ commutes with $B$ if $BA \subseteq AB$.\par \vskip 0.2 cm
\noindent \noindent Recall also that if $A$, $B$  and $AB$ are all
densely defined, then we have $A^*B^*\subseteq (BA)^*$. There are
cases where equality holds in the previous inclusion, namely if $B$
is bounded. For other notions and results about bounded and
unbounded operators, the reader may consult $[19].$\par\vskip 0.2 cm
\noindent A generalization of normal, quasinormal, hyponormal and
paranormal operators to unbounded normal quasinormal, hyponormal and
paranormal operators has been presented by several authors in the
last years. Some important references are $[ 7,9,10,11,20,21].$
\par \vskip 0.2 cm \noindent An operator $A\in Op(\mathcal{H})$, we said to be
hyponormal if $\mathcal{D}(A)\subset \mathcal{D}(A^*)$ and $\|A^*x\|
\leq \|Ax\|$ for all $x \in \mathcal{D}(A).$We refer to $[10]$ for
basic facts concerning unbounded hyponormal operators.\par\vskip 0.2
cm \noindent An operator $A\in Op(\mathcal{H})$ is said to be normal
if $A^*A = AA^*$. A closed operator  $A\in Op(\mathcal{H})$ is
normal if and only if $\mathcal{D}(A) =\mathcal{D}(A^*)$ and
$\|Ax\|=\|A^*x\|$ for all $x\in \mathcal{D}(A).$\par \vskip 0.2 cm
\noindent A densely defined operator $A:\mathcal{D}(A)\subseteq
\mathcal{H}\longrightarrow \mathcal{H}$ is said to be paranormal if
$$\|Ax\|^2\leq \|A^2x\|\|x\|\;\;\hbox{for all}\;\;x \in \mathcal{D}(A^2)$$ or equivalently
$$\|Ax\|^2\leq \|A^2x\|\;\;\hbox{for every unit vector }\;\;x \in
\mathcal{D}(A^2). \;\;(\hbox{See}\;\; [7]).$$
\par\vskip 0.2 cm \noindent Let $A$ and $B$ be
normal operators on a complex separable Hilbert space $\mathcal{H}.$
The equation $AX = XB$ implies $A^*X = XB^* $ for some operator
$X\in \mathcal{B}(\mathcal{H})$ is known as the familiar
Fuglede-Putnam theorem. $(\hbox{See}\;\; [14])$.\par\vskip
0.2cm\noindent
\par\vskip 0.2 cm \noindent Consider two normal (resp. hyponormal) operators $A$ and $B$ on a Hilbert space. It is known that, in
general, $AB$ is not normal (resp. not hyponormal). Kaplansky showed
that it may be possible that $AB$ is normal while $BA$ is not.
Indeed, he showed that if $A$ and $AB$ are normal, then $BA$ is
normal if and only if B commutes with $AA^*$, $(\hbox{see}\;
\;[12])$.\par\vskip 0.2 cm\noindent  In $[18, \hbox{Theorem}\;3 ]$,
Patel and Ramanujan proved that if $A$ and $B\in
\mathcal{B}(\mathcal{H})$ are hyponormal such that $A$ commutes with
$|B|$ and $B$ commutes with $|A^*|$  then $AB$ and $BA$ are
hyponormal.\par\vskip 0.2cm \noindent The study of operators
satisfying Kaplansky theorem  is of significant interest and is
currently being done by a number of mathematicians around the
world.Some developments toward this subject have been done in
$[6,10,12,15,16,17]$ and the references therein.\par \vskip 0.2 cm
\noindent The aim of this paper is to give sufficient conditions on
two some $(p,k)$-quasihyponormal operators (bounded or not), defined
on a Hilbert space, which make their product
$(p,k)$-quasihyponormal.
 The inspiration for our investigation comes
from $[1], [15]$ and $[17]$.\par \vskip 0.2 cm \noindent The outline
of the paper is as follows. First of all,we introduce notations and
consider a few preliminary results which are useful to prove the
main result. In the second section we discussed conditions which
ensure hyponormality,$k$-quasihyponormality or
$(p,k)$-quasihyponormality of the product of
hyponormal,$k$-quasihyponormal or $(p,k)$-quasihyponormality of
operators. In Section three, the concepts of $k$-quasihyponormal and
$k$-paranormal unbounded operators  are introduced.We give
sufficient conditions  which ensure $k$-quasihyponormality
($k$-parnormality or $k$-*-paranormality) of the product of
$k$-quasihyponormal ( $k$-paranotmal or $k-*$-paranormal) of
unbounded operators.
\section{ KAPLANSKY LIKE  THEOREM FOR BOUNDED $(p,k)$-QUASIHYPONORMAL OPERATORS}
The next definitions and lemmas give a brief description for the
background on which the paper will build on.
\begin{definition}
An operator $A\in \mathcal{B}(\mathcal{H})$ is said to be \par\vskip
0.2 cm \noindent (1)\;$p$-hyponormal if  $A^*A)^p-(AA^*)^p\geq0$ for
$0<p\leq 1$ \;  $ ([3] )$.\par \vskip 0.2 cm \noindent
(2)\;$p$-quasihyponormal  if $A^{*}\bigg(
(A^*A)^p-(AA^*)^p\bigg)A\geq 0,\;\; 0<p\leq 1$ \;($( [4] )$.\par
\vskip 0.2 cm \noindent (3)\;$k$-quasihyponormal operator if
$A^k(A^*A-AA^*)A^k\geq 0$ for positive integer $k$ $( [5])$.\par
\vskip 0.2 cm \noindent (4)\; $(p.k)$-quasihyponormal if
$A^{*k}\bigg( (A^*A)^p-(AA^*)^p\bigg)A^k\geq 0,\;0<p\leq
1\;\;\hbox{and} \; k\;\hbox{is positive integer}$\;$([13]).$
\par\vskip 0.2 cm \noindent
A $(p,k)$-quasihyponormal is an extension of $p$-hyponormal,
$p$-quasihyponormal and $k$-quasihyponormal.
\end{definition}

\begin{remark} Let $\mathcal{N},\mathcal{HN},p\mathcal{H},\mathcal{Q}(p)$ and $\mathcal{Q}(p,k)$ denote the classes consisting of
normal, hyponormal, $p$-hyponormal,$p$-quasihyponormal and
$(p,k)$-quasihyponormal operators.\par\vskip 0.2 cm \noindent These
classes are related by the proper inclusion. $( \hbox{See}\;
\;[14])$.
$$\mathcal{N}\subset\mathcal{HN}\subset p\mathcal{H}\subset\mathcal{Q}(p)\subset \mathcal{Q}(p,k).$$

\end{remark}
\noindent We need the following lemma which is important for the
sequel.
\begin{lemma}\par \vskip 0.2 cm \noindent
Let $A,B \in \mathcal{B}(\mathcal{H}).$ Then the following
properties hold \par \vskip 0.2 cm \noindent(1)\;If $A\geq B$ then
$C^*AC\geq C^*BC,\;\;\hbox{for all}\;\;C\in
\mathcal{B}(\mathcal{H}).$\par\vskip 0.2 cm \noindent (2)\;\; If
range of $C$ is dense in $\mathcal{H}$  then
$$A\geq B \Longleftrightarrow C^*AC\geq C^*BC.$$
\end{lemma}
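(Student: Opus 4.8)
The plan is to prove both parts using the standard characterization of positivity via inner products, namely that for a self-adjoint operator $T$, $T\geq 0$ if and only if $\langle Tx,x\rangle \geq 0$ for all $x\in\mathcal{H}$. The inequality $A\geq B$ means precisely that $A-B\geq 0$, i.e.\ $\langle (A-B)x,x\rangle\geq 0$ for every $x$.

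For part (1), I would start from $A\geq B$, so that $A-B\geq 0$. The key observation is the algebraic identity $C^*AC-C^*BC=C^*(A-B)C$. It then suffices to show that $C^*(A-B)C\geq 0$. Setting $T=A-B\geq 0$, I would compute, for an arbitrary $x\in\mathcal{H}$,
\[
\langle C^*TCx,x\rangle=\langle TCx,Cx\rangle\geq 0,
\]
where the equality uses the defining property of the adjoint $\langle C^*y,x\rangle=\langle y,Cx\rangle$ with $y=TCx$, and the inequality holds because $T\geq 0$ applied to the vector $Cx$. Since this holds for every $x$, we conclude $C^*TC\geq 0$, that is, $C^*AC\geq C^*BC$. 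This direction is completely routine and forms the ``$\Rightarrow$'' implication of part (2) as a special case.

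For part (2), the forward implication is exactly part (1) (no density hypothesis is needed there). The substance is the reverse implication: assuming $C^*AC\geq C^*BC$ and that $\mathcal{R}(C)$ is dense, deduce $A\geq B$. Again writing $T=A-B$, the hypothesis gives $\langle T Cx,Cx\rangle\geq 0$ for all $x$, i.e.\ $\langle Ty,y\rangle\geq 0$ for all $y\in\mathcal{R}(C)$. I would then use the density of $\mathcal{R}(C)$: given an arbitrary $z\in\mathcal{H}$, choose a sequence $y_n\in\mathcal{R}(C)$ with $y_n\to z$, and pass to the limit in $\langle Ty_n,y_n\rangle\geq 0$. Continuity of the map $y\mapsto\langle Ty,y\rangle$, which follows from boundedness of $T$ together with continuity of the inner product, yields $\langle Tz,z\rangle\geq 0$, hence $T\geq 0$ and $A\geq B$.

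The only genuine obstacle is the limiting argument in the reverse direction, and it hinges critically on the boundedness of $T=A-B$: since $A,B\in\mathcal{B}(\mathcal{H})$, the quadratic form $y\mapsto\langle Ty,y\rangle$ is continuous, so the nonnegativity passes from the dense subspace $\mathcal{R}(C)$ to all of $\mathcal{H}$. I would be careful to note that this is where the hypothesis $A,B\in\mathcal{B}(\mathcal{H})$ is essential, as the estimate $|\langle Ty_n,y_n\rangle-\langle Tz,z\rangle|\leq \|T\|\,(\|y_n\|+\|z\|)\,\|y_n-z\|\to 0$ guarantees convergence. Everything else is a direct application of the adjoint identity and the positivity criterion.
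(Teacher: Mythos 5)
The paper offers no proof of this lemma (it is ``left to the reader''), so there is nothing to compare against; your argument is the standard one and is correct and complete. Both steps check out: the identity $\langle C^*(A-B)Cx,x\rangle=\langle (A-B)Cx,Cx\rangle\geq 0$ gives part (1), and for the converse in part (2) your continuity estimate $|\langle Ty_n,y_n\rangle-\langle Tz,z\rangle|\leq\|T\|(\|y_n\|+\|z\|)\|y_n-z\|$ correctly transfers nonnegativity of the quadratic form from the dense subspace $\mathcal{R}(C)$ to all of $\mathcal{H}$.
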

\begin{proof}
This proof will be left to the reader.
\end{proof}
\noindent The following famous inequality is needful.
\begin{lemma}$( \;[2]\;\;\hbox{ Hansen's inequality}\;)$\par \vskip 0.2 cm \noindent
Let $A,B\in \mathcal{B}(\mathcal{H})$ such that $A\geq 0$ and $\|B\|
\leq 1$ then
$$\bigg(B^*AB\bigg)^\alpha \geq B^*A^\alpha B\;\;\;0< \alpha \leq 1.$$
\end{lemma}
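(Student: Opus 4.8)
The plan is to derive Hansen's inequality from the standard integral representation of the power function together with a single resolvent estimate. For $\alpha = 1$ the claim is the trivial equality $(B^*AB) = B^*AB$, so I assume $0 < \alpha < 1$. For every $\lambda \geq 0$ one has the representation
$$\lambda^\alpha = \frac{\sin \alpha\pi}{\pi}\int_0^\infty \frac{\lambda}{\lambda+s}\, s^{\alpha-1}\,ds,$$
which I would justify by the substitution $s = \lambda u$ and the Beta-integral $\int_0^\infty u^{\alpha-1}(1+u)^{-1}\,du = \pi/\sin\alpha\pi$. Applying the spectral theorem to the positive operators $A$ and $B^*AB$ converts this into the operator identities $A^\alpha = \frac{\sin\alpha\pi}{\pi}\int_0^\infty A(A+s)^{-1}s^{\alpha-1}\,ds$ and $(B^*AB)^\alpha = \frac{\sin\alpha\pi}{\pi}\int_0^\infty (B^*AB)(B^*AB+s)^{-1}s^{\alpha-1}\,ds$. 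Conjugating the first identity by $B$ and comparing the two integrands, the asserted inequality reduces to proving, for each fixed $s > 0$, the pointwise estimate $B^*A(A+s)^{-1}B \leq (B^*AB)(B^*AB+s)^{-1}$.

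Using $X(X+s)^{-1} = I - s(X+s)^{-1}$ on both sides, I would rewrite this target as $(B^*AB+s)^{-1} \leq R_s$, where $R_s := B^*(A+s)^{-1}B + s^{-1}(I - B^*B)$; note that $I - B^*B \geq 0$ precisely because $\|B\|\leq 1$, so $R_s$ is a genuine positive perturbation. The key is to recognize $R_s$ as a compression. Setting $\widetilde A := A \oplus 0$ on $\mathcal{H}\oplus\mathcal{H}$ and letting $U = \bigl(\begin{smallmatrix} B & (I-BB^*)^{1/2} \\ (I-B^*B)^{1/2} & -B^* \end{smallmatrix}\bigr)$ be the unitary dilation of the contraction $B$ (unitarity follows from the intertwining relation $B(I-B^*B)^{1/2} = (I-BB^*)^{1/2}B$), a direct block computation shows that the $(1,1)$-entry of $(U^*\widetilde A U + s)^{-1} = U^*\bigl((A+s)^{-1}\oplus s^{-1}\bigr)U$ is exactly $R_s$, while the $(1,1)$-entry of $U^*\widetilde A U + s$ is $B^*AB + s$.

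The hard part is then the compression inequality $(N^{-1})_{11} \geq (N_{11})^{-1}$ for the positive invertible operator $N := U^*\widetilde A U + s$, which I would obtain from the Schur-complement identity $(N^{-1})_{11} = \bigl(N_{11} - N_{12}N_{22}^{-1}N_{21}\bigr)^{-1}$: since $N_{12}N_{22}^{-1}N_{21} \geq 0$, the bracket is $\leq N_{11}$, and inverting (using that $X \geq Y > 0$ implies $Y^{-1} \geq X^{-1}$) yields $R_s = (N^{-1})_{11} \geq (N_{11})^{-1} = (B^*AB+s)^{-1}$. This establishes the pointwise resolvent estimate; integrating it against $s^{\alpha-1}\,ds$ and multiplying by the positive constant $\sin\alpha\pi/\pi$ gives $B^*A^\alpha B \leq (B^*AB)^\alpha$, as desired. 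I expect the only delicate points to be the justification of the resolvent integral in the operator sense and the block bookkeeping in the dilation step, together with the standard care needed when $A$ or $B^*AB$ fails to be invertible, which is handled by replacing $A$ with $A + \epsilon I$ and letting $\epsilon \downarrow 0$.
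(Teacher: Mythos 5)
Your argument is correct, and it is worth noting that the paper itself offers no proof of this lemma at all: Hansen's inequality is simply quoted from the literature (the citation $[2]$), so there is no in-paper argument to compare against. What you have written is a valid self-contained proof along the classical lines. The chain of reductions is sound: the Beta-integral representation of $\lambda^\alpha$ for $0<\alpha<1$ passes to positive operators by the spectral theorem (and the integral converges in norm, since the integrand is $O(s^{\alpha-1})$ near $0$ and $O(s^{\alpha-2})$ at infinity); the identity $X(X+s)^{-1}=I-s(X+s)^{-1}$ correctly converts the pointwise comparison of integrands into the resolvent estimate $(B^*AB+s)^{-1}\leq B^*(A+s)^{-1}B+s^{-1}(I-B^*B)$; the Halmos--Julia dilation $U$ of the contraction $B$ is unitary by the intertwining relation you cite, and your block computation of the $(1,1)$ entries of $U^*(A\oplus 0)U+s$ and of its inverse is right; and the compression inequality $(N^{-1})_{11}\geq (N_{11})^{-1}$ follows exactly as you say from the Schur-complement formula, since $N_{22}\geq s>0$ is invertible and $N_{12}N_{22}^{-1}N_{12}^*\geq 0$. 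The hypothesis $\|B\|\leq 1$ enters precisely where you flag it, in the existence of $(I-B^*B)^{1/2}$ and $(I-BB^*)^{1/2}$. The final $\epsilon$-regularization you mention is in fact unnecessary, since $A+s$ and $B^*AB+s$ are already invertible for every $s>0$; but including it does no harm. The only cosmetic slip is the phrase ``conjugating the first identity by $B$,'' which should read ``multiplying on the left by $B^*$ and on the right by $B$''; the computation you actually perform is the correct one.
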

 \noindent Kaplansky showed that it may be possible that $AB$ is normal
while $BA$ is not. Indeed, he showed that if $A$ and $AB$ are
normal, then $BA$ is normal if and only if $B$ commutes with $|A|$.
\par\vskip 0.2 cm \noindent  Kaplansky theorem\'{}s has been extended form normal operators
to hyponormal operators and unbounded hyponormal operators by the
authors in $[1].$ We collect some of their results in the following
theorem.
\begin{theorem}$(\;[1]\;)$
Let $A,B \in \mathcal{B}(\mathcal{H})$.The following statements
hold:
\par \vskip 0.2 cm \noindent (1)\; If $A$ is normal and $AB$ is
hyponormal then
$$BAA^*=AA^*B \Longrightarrow  BA\;\hbox{is hyponormal}.$$

\par \vskip 0.2 cm \noindent (2)\; If $A$ is normal and $AB$ is
co-hyponormal then
$$BAA^*=AA^*B \Longrightarrow  BA\;\hbox{is co-hyponormal}.$$

\par\vskip 0.2 cm \noindent (3)\;
 If $A$ is normal, $AB$ is
hyponormal and $BA$ is co-hyponormal then
$$BAA^*=AA^*B \Longleftrightarrow AB\;\hbox{and}\; BA\;\hbox{are normal}.$$
\end{theorem}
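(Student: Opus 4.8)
The plan is to dispose of parts (1) and (2) by reducing each to a single operator inequality that is transported across the polar decomposition of $A$, and then to assemble (3) from those reductions together with the Fuglede--Putnam theorem.

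First I would fix the positive operators $P:=B^*B$ and $Q:=BB^*$. Taking adjoints in $BAA^*=AA^*B$ shows that $B$ and $B^*$, hence $P$ and $Q$, all commute with $AA^*$; since $A$ is normal $AA^*=A^*A=|A|^2$, so $P,Q$ commute as well with $|A|:=(AA^*)^{1/2}$ and with the orthogonal projection $E$ onto $\overline{\mathcal{R}(|A|)}$. Writing out $AB$ hyponormal, $(AB)^*(AB)\ge(AB)(AB)^*$, and using $A^*A=AA^*$ together with these commutation relations, the left side collapses to $|A|P|A|$ and the right side to $AQA^*$; in the same way $BA$ hyponormal is equivalent to $A^*PA\ge |A|Q|A|$. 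Thus statement (1) amounts to the implication
\[ |A|\,P\,|A|\ \ge\ A\,Q\,A^*\quad\Longrightarrow\quad A^*P A\ \ge\ |A|\,Q\,|A|. \]

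Next I would invoke the polar decomposition $A=U|A|=|A|U$ of the normal operator $A$, where $U$ is a partial isometry commuting with $|A|$ and $U^*U=UU^*=E$ (normality is used here to guarantee $\ker A=\ker A^*$, so that the initial and final projections of $U$ coincide with $E$). Setting $R:=|A|P|A|$ and $S:=|A|Q|A|$, the hypothesis reads $R\ge USU^*$ and the target reads $U^*RU\ge S$. Conjugating the hypothesis by $U$ via Lemma~2.1(1) gives $U^*RU\ge U^*USU^*U=ESE$, and since $E$ commutes with $S$ and $E|A|=|A|$ one has $ESE=S$, which is exactly the target; the step is in fact an equivalence, since from $U^*RU\ge S$ one recovers $R=ERE=U(U^*RU)U^*\ge USU^*$. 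The delicate point, and the part I expect to demand the most care, is precisely this bookkeeping with $E$ when $A$ is not injective: one must know that $P,Q$ and $|A|$ all commute with $E$ — which is exactly where $BAA^*=AA^*B$ and the normality of $A$ are essential — so that the partial isometry can be handled as a unitary on the relevant range.

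For (2) the argument is identical with every inequality reversed: co-hyponormality of $AB$ reads $R\le USU^*$ and of $BA$ reads $U^*RU\le S$, and the same conjugation transports one into the other. Finally, for (3) I would exploit that, under the standing hypothesis $BAA^*=AA^*B$, the reductions above give the equivalences ``$AB$ hyponormal $\iff BA$ hyponormal'' and ``$AB$ co-hyponormal $\iff BA$ co-hyponormal''. Assuming the commutation: from $AB$ hyponormal we obtain $BA$ hyponormal, which together with the assumed co-hyponormality of $BA$ forces $BA$ normal; dually, from $BA$ co-hyponormal we obtain $AB$ co-hyponormal, which with the assumed hyponormality of $AB$ forces $AB$ normal. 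For the converse, assume $AB$ and $BA$ are normal. The identity $A(BA)=(AB)A$ exhibits $A$ as an intertwiner of the two normal operators $AB$ and $BA$, so the Fuglede--Putnam theorem yields $(AB)^*A=A(BA)^*$, that is $B^*A^*A=AA^*B^*$; using $A^*A=AA^*$ this becomes $B^*AA^*=AA^*B^*$, and taking adjoints gives $BAA^*=AA^*B$, completing the equivalence.
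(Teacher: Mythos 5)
The paper does not actually prove this theorem: it is imported verbatim from reference $[1]$ and stated without argument, so there is no in-paper proof to measure you against. Judged on its own terms, your proof is correct and complete. The reduction of hyponormality of $AB$ and $BA$ to the inequalities $|A|P|A|\ge AQA^*$ and $A^*PA\ge |A|Q|A|$ is legitimate once one knows $B$, $B^*$, $P=B^*B$ and $Q=BB^*$ commute with $AA^*=|A|^2$ (hence with $|A|$ and with the support projection $E$), and that is exactly what $BAA^*=AA^*B$ plus normality of $A$ delivers; the transport across the polar decomposition via Lemma~2.1(1), together with $ESE=S$ and $ERE=R$, is sound, and your observation that the implication is actually an equivalence is what makes part (3) work (you need both ``$AB$ hyponormal $\Rightarrow BA$ hyponormal'' and ``$BA$ co-hyponormal $\Rightarrow AB$ co-hyponormal''). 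The Fuglede--Putnam step for the converse of (3) is the standard one and is also how the paper argues the analogous converse in its Theorem~2.2. Stylistically your route differs from what the paper does for its related results (Proposition~2.1, Theorems~2.4 and~2.7), where everything is run as norm inequalities of the form $\|(BA)^*x\|=\|(AB)^*Ux\|\le\|(AB)Ux\|=\|(BA)x\|$ after writing $BA=U^*(AB)U$ with $U$ unitary; that version is shorter but silently assumes $U$ can be taken unitary, whereas your bookkeeping with the partial isometry and the projection $E$ handles a non-injective $A$ explicitly. The only cosmetic caveat: for a normal $A$ one may always extend $U$ to a genuine unitary commuting with $|A|$ (define it as the identity on $\ker A$), which would let you drop the $E$-bookkeeping entirely; but what you wrote is rigorous as it stands.
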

We give another proof of Kaplansky theorem.
 \begin{theorem}$[\hbox{Kaplansky}, [12])$
Let $A$ and $B \in \mathcal{B}(\mathcal{H})$ be two bounded
operators such that $AB$ and $A$ are normal. Then $$A^{*}AB=BA^{*}A
\Leftrightarrow(BA)\;\;\hbox{is normal}.\;$$
\end{theorem}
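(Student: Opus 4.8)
The plan is to reduce the stated condition to the single commutation relation $A^*AB=BA^*A$ and then treat the two implications separately, using the Fuglede--Putnam theorem for the easy direction and a direct defect computation for the substantive one. First I would record the consequences of $A$ being normal: writing $P:=A^*A=AA^*$, the operator $P$ is positive and self-adjoint, commutes with $A$ and $A^*$, and satisfies $\mathcal N(P)=\mathcal N(A)$. The hypothesis $A^*AB=BA^*A$ is exactly $PB=BP$; taking adjoints gives $PB^*=B^*P$, and hence $P$ commutes also with $BB^*$ and $B^*B$.

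For the implication ``$BA$ normal $\Rightarrow A^*AB=BA^*A$'' I would exploit the elementary identity $(AB)A=A(BA)$. Setting $N_1=AB$ and $N_2=BA$, this reads $N_1A=AN_2$ with both $N_1$ and $N_2$ normal (the latter by assumption). The Fuglede--Putnam theorem quoted in the introduction then yields $N_1^*A=AN_2^*$, i.e. $B^*A^*A=AA^*B^*$. Since $A$ is normal, $AA^*=A^*A$, so this says $B^*$ commutes with $A^*A$; taking adjoints recovers $A^*AB=BA^*A$.

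The main work is in the converse ``$PB=BP\Rightarrow BA$ normal'', where I would study the self-adjoint defect
\[
D:=(BA)^*(BA)-(BA)(BA)^*=A^*B^*BA-PBB^*,
\]
having used $AA^*=P$ and $BP=PB$ to rewrite $(BA)(BA)^*=BPB^*=PBB^*$. Normality of $AB$ gives $B^*PB=ABB^*A^*$, and $PB^*=B^*P$ turns the left side into $PB^*B$, so that $PB^*B=ABB^*A^*$. Conjugating this identity by $A$ (apply $A^*(\,\cdot\,)A$) and using $A^*P=PA^*$, $A^*A=P$ and $[P,BB^*]=0$ produces $PA^*B^*BA=P^2BB^*$, that is, $PD=0$.

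The remaining, and main, obstacle is that $P=A^*A$ need not be invertible, so $PD=0$ does not instantly give $D=0$; I would resolve this through the kernel of $A$. From $PD=0$ one gets $\mathcal R(D)\subseteq\mathcal N(P)=\mathcal N(A)$. On the other hand, for $x\in\mathcal N(A)$ we have $Ax=0$, hence $A^*B^*B(Ax)=0$, while $PBB^*x=BB^*Px=BB^*A^*(Ax)=0$; thus $Dx=0$, giving $\mathcal N(A)\subseteq\mathcal N(D)$. Combining the two inclusions yields $\mathcal R(D)\subseteq\mathcal N(D)$, i.e. $D^2=0$, and since $D=D^*$ this forces $\|Dx\|^2=\langle D^2x,x\rangle=0$ for every $x$, so $D=0$ and $BA$ is normal. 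The only point demanding genuine care is this cancellation of the non-invertible $P$; the rest is formal manipulation of the two normality hypotheses together with $PB=BP$.
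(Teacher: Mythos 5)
Your proposal is correct, and both directions are sound. The easy direction is identical to the paper's: both of you apply the Fuglede--Putnam theorem to the intertwining relation $(AB)A=A(BA)$ between the two normal operators $AB$ and $BA$ and then use $AA^*=A^*A$. For the substantive direction the two arguments share the same skeleton --- show that the defect $D=(BA)^*(BA)-(BA)(BA)^*$ has range inside $\mathcal{N}(A)$ and then kill it --- but they execute it differently. The paper works at the level of inner products, arriving at $\langle Dx,\,A^*Ax\rangle=0$ for all $x$, and then finishes with a case split on whether $\mathcal{N}(A)$ is trivial followed by a proof by contradiction; that last step is the weakest part of the published argument (from $Tx_0\notin\mathcal{N}(A)^\perp$ and $x_0\notin\mathcal{N}(A)^\perp$ one cannot in general place $x_0$ in $\mathcal{N}(A)$, since $\mathcal{H}=\mathcal{N}(A)\oplus\mathcal{N}(A)^\perp$ does not mean every vector lies in one summand). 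Your route avoids this entirely: you derive the operator identity $PD=0$ purely algebraically by conjugating the normality relation $PB^*B=ABB^*A^*$ with $A$, observe that $\mathcal{R}(D)\subseteq\mathcal{N}(P)=\mathcal{N}(A)\subseteq\mathcal{N}(D)$, and conclude $D^2=0$, whence $D=0$ because $D$ is self-adjoint. This is shorter, needs no case analysis on $\mathcal{N}(A)$, and is logically airtight where the paper's contradiction argument is shaky; what it costs is only the small preliminary bookkeeping that $P$ commutes with $B$, $B^*$, $BB^*$ and $B^*B$, which you correctly establish from the hypothesis and its adjoint.
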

  \begin{proof}
$^{\prime\prime}\Longrightarrow ^{\prime\prime}$ \par \vskip 0.2 cm
\noindent Assume that   $A^{*}AB=BA^{*}A$  and we need to prove that
$BA$ is normal.\par \vskip 0.2 cm It is well know that $A$ is normal
if and only if $\left\|Ax\right\|=\left\|A^{*}x\right\|$ for all
$x\in \mathcal{H}$.\par \vskip 0.2 cm \noindent
  Since $AB$ is normal we have
  $$\|(AB)Ax\|=\|(AB)^*Ax\| \;\hbox{for all }\;\;\;x\in \mathcal{H} $$
  and we deduce that
  $$\|A(BA)x\|=\|A(BA)^*x\|\;\;\;\hbox{for all }\;x\in \mathcal{H}.$$
\par \vskip 0.3 cm \noindent
By hypotheses given in the theorem, we have \begin{eqnarray*}
 &&\left\|A\left(BA\right)x\right\|= \left\|A\left(BA\right)^{*}x\right\|\\&\Leftrightarrow& \langle A(BA)x,\;A(BA)x\rangle=\left\langle A(BA)^{*}x,\;A(BA^{*})x\right\rangle\\
& \Leftrightarrow&\left\langle \left[A(BA)\right]^{*}
A(BA)x,\;x\right\rangle=\left\langle
\left[A(BA^{*})\right]^{*}A(BA)^{*}x,\;x\right\rangle
\\&\Leftrightarrow& \left\langle \left[(BA)\right]^{*}A^{*}
A(BA)x,\;x\right\rangle=\left\langle
 \left[(BA)\right]A^{*}A(BA)^{*}x,\;x\right\rangle
\\& \Leftrightarrow&\left\langle A^{*}B^{*}A^{*}
A(BA)x,\;x\right\rangle =\left\langle
 BAA^{*}AA^{*}B^{*}x,\;x\right\rangle \\&
 \Leftrightarrow&\left\langle A^{*}AA^{*}B^{*}BAx,\;x\right\rangle=\left\langle
 AA^{*}AA^{*}BAA^{*}B^{*}x,\;x\right\rangle \\&
 \Leftrightarrow &\left\langle \left(BA\right)^{*}\left(BA\right)x,\;A^{*}Ax\right\rangle=\left\langle
 \left(BA\right)\left(BA\right)^{*}x,\;
 AA^{*}x\right\rangle\\&
  \Leftrightarrow &\langle\bigg((BA)^{*}(BA)-(BA)(BA)^{*}\bigg)x,\; A^*Ax\rangle=0\;\;\hbox{for all }\;x\in \mathcal{H}.\end{eqnarray*}
Put $T=\bigg((BA)^{*}(BA)-(BA)(BA)^{*}\bigg)$. \par \vskip 0.2 cm
\noindent Form the identities above we have $\langle
Tx,\;A^*Ax\rangle=0\;\;\;\hbox{for all }\;x\in \mathcal{H}.$ This
implies  that $$Tx \in
\mathcal{R}(A^*A)^\bot=\overline{\mathcal{R}(A^*A)}^\bot
=\mathcal{N}(A^*A)=\mathcal{N}(A)\;\;\;\hbox{for all }\;x\in
\mathcal{H}.$$
   \par \vskip 0.2 cm
  \noindent Now if $\mathcal{N}(A)=\{0\}$, we have $Tx=0\;\hbox{for all }\;x\in
  \mathcal{H}$ and $T\equiv 0$ i.e;. $BA$ is normal.\par \vskip 0.2
  cm \noindent If $\mathcal{N}(A)\not=\{0\}$. Suppose that,
contrary to our claim, the operator  $T\not\equiv
  0.$ There exists $x_0\in \mathcal{H}$,\;$x_0\not=0$ such that
  $Tx_0\not=0.$ Since $Tx_0 \in \mathcal{N}(A)$ and $\mathcal{H}=\mathcal{N}(A) \oplus
  \mathcal{N}(A)^\bot$, it follows that $Tx_0\notin \mathcal{N}(A)^\bot.
  $ From this we deduce that there exists $z_0 \in  \mathcal{N}(A)$
  so that $\langle Tx_0,\;z_0\rangle \not=0.$ \par \vskip 0.2 cm
  \noindent
As usual, this leads to the statement that
   $$0\not=\langle Tx_0,\;z_0\rangle=\langle x_0,\;Tz_0\rangle \Longrightarrow x_0\notin  \mathcal{N}(A)^\bot\;\;(\hbox{since}\;\;Tz_0 \in
  \mathcal{N}(A)).$$ This means that, $x_0 \in \mathcal{N}(A)$ and
  $$Tx_0=\bigg((BA)^{*}(BA)-(BA)(BA)^{*}\bigg)x_0=-(BA)(BA)^*x_0=0.$$ This
contradicts the assumption that $Tx_0 \not=0.$\par \vskip 0.2 cm
\noindent
  (2) $^{\prime \prime} \Longleftarrow  ^{\prime \prime}$
   The reverse application is even evidence of Kaplansky\par \vskip
   0.2 cm \noindent
  We have $ABA=ABA \Rightarrow (AB)A=A(BA)$, and by the theorem of Fuglede-Putnam
  $$(AB)^{*}A=A(BA)^{*} \Rightarrow ((AB)^{*}A)^{*}=(A(BA)^{*})^{*}.$$
  So
  $$A^{*}(AB)=(BA)A^{*} \Rightarrow A^{*}AB=BAA^{*}.$$
   \end{proof}
   \noindent
Consider two quasihyponormal operators $A$ and $B$ on a Hilbert
space. It is known that, in general, $AB$ is not quasihyponormal.
\begin{example} Let
$\mathcal{H}=l^2(\mathbb{N})$ with the canonical orthonormal basis
$(e_n)_{n \in \mathbb{N}}$ and consider $A$ the unilateral right
shift operator on $\mathcal{H}$ defined by $Ae_n=e_{n+1}$ for all $n
\in \mathbb{N}$ and $B$ the operator defined on $\mathcal{H}$ by
$$Be_n=\left\{
\begin{array}{ll}
 e_n ,\;\;\hbox{if}\;\;n\not=1\\
 \\
0 \;\hbox{if}\; n=1.\\
\end{array}
  \right.$$

A simple calculation shows that  $A$ and $B$ are quasihyponormal and
$AB$ does not quasihyponormal,since
$$\|AB)^{*}(AB)\big)e_0\|=1
\hbox{and}\;\;\|(AB)^2e_0\|=0.$$
\end{example}
\par \vskip 0.2 cm \noindent Denote by  $\mathbb{C}^{mn}$ the set of all $ m\times n$ complex
matrix.\par \vskip 0.2 cm \noindent In $[8]$,the authors proved the
following results.
\begin{theorem} $([8])$ Let $A=UH$,where $H\in \mathbb{C}^{nn}$ is
positive semidefinite
 Hermitian and $U\in \mathbb{C}^{nn}$ is unitary, and let $B\in
 \mathbb{C}^{nn},$ \par \vskip 0.2 cm \noindent (1)\; if $BU$ is
 normal and $HBU=BUH$, then $AB$ and $BA$ are normal,\par\vskip 0.2
 cm \noindent (2)\;if $AB$ and $BA$ are normal,then $HBU=BUH.$
\end{theorem}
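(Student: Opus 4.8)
The plan is to exploit the factorization $A=UH$ to reduce both statements to a single commutation property of $H$ with the matrix $N:=BU$, and then to invoke the Fuglede--Putnam theorem recalled in the introduction. First I would record the elementary identities forced by $U^{*}U=UU^{*}=I$ and $H^{*}=H$: namely $A^{*}A=HU^{*}UH=H^{2}$ and $AA^{*}=UH^{2}U^{*}$, so that $H=|A|$. Writing $B=NU^{*}$, the two products become $BA=(BU)H=NH$ and $AB=UHB=UHNU^{*}$; in particular $AB=U(HN)U^{*}$ is unitarily equivalent to $HN$. Hence ``$AB$ normal'' is the same as ``$HN$ normal'', ``$BA$ normal'' is the same as ``$NH$ normal'', and the target identity $HBU=BUH$ is exactly $HN=NH$.

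For (1) I assume $N=BU$ is normal and $HN=NH$. Since $N$ is normal and $H$ commutes with $N$, Fuglede's theorem (the $M=N$ case of the Fuglede--Putnam theorem) gives $HN^{*}=N^{*}H$, so $H$ commutes with $N^{*}$ as well, and therefore with $H^{2}N^{*}=N^{*}H^{2}$. A one-line computation then closes the argument:
$$(NH)(NH)^{*}=NH^{2}N^{*}=NN^{*}H^{2}=N^{*}NH^{2}=HN^{*}NH=(NH)^{*}(NH),$$
where the steps use $H^{2}N^{*}=N^{*}H^{2}$ and the normality $NN^{*}=N^{*}N$. Thus $BA=NH$ is normal, and since $AB=U(NH)U^{*}$ is unitarily equivalent to it, $AB$ is normal as well.

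For (2) the key move is to feed the trivial associativity identity $(AB)A=A(BA)$ into Fuglede--Putnam. Both $AB$ and $BA$ are normal by hypothesis, so taking $\widetilde A=AB$, $\widetilde B=BA$, $X=A$ in the Fuglede--Putnam theorem (the premise $\widetilde A X=X\widetilde B$ holds because both sides equal $ABA$) yields $(AB)^{*}A=A(BA)^{*}$. Expanding with $A^{*}A=H^{2}$ and $AA^{*}=UH^{2}U^{*}$ turns this into $B^{*}H^{2}=UH^{2}U^{*}B^{*}$; passing to adjoints and multiplying on the right by $U$ gives $H^{2}(BU)=(BU)H^{2}$. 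Finally, since $H\ge 0$ its square root $H$ is a polynomial in $H^{2}$ on the finite spectrum of $H^{2}$, so anything commuting with $H^{2}$ commutes with $H$. Therefore $H(BU)=(BU)H$, which is precisely $HBU=BUH$.

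The main obstacle is really just spotting the correct use of Fuglede--Putnam in (2): reading the identity $ABA$ two ways as $(AB)A=A(BA)$ is what makes the normality of \emph{both} products usable at once, and everything after that is bookkeeping with $A^{*}A=H^{2}$ and $AA^{*}=UH^{2}U^{*}$. The only other delicate point is the passage from ``commutes with $H^{2}$'' to ``commutes with $H$'', which is immediate from the functional calculus for a positive semidefinite matrix and is exactly where positivity of $H$ (not merely Hermiticity) enters. In (1) the analogous subtlety is the appeal to Fuglede's theorem to upgrade the commutation of $H$ with $N$ to commutation with $N^{*}$, without which the normality computation would not close.
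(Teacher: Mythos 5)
Your proof is correct. Note first that the paper itself offers no proof of this statement: it is quoted verbatim from reference $[8]$ (Deutsch--Gibson--Schneider) without argument, so there is no in-paper proof to compare against. Judged on its own, your argument is complete and sound. The reduction $A^*A=H^2$, $AA^*=UH^2U^*$, $BA=NH$ and $AB=U(HN)U^*$ with $N=BU$ is exactly the right normal form; in part (1) the appeal to Fuglede's theorem to upgrade $HN=NH$ to $HN^*=N^*H$ is legitimate (in finite dimensions one can alternatively note that $N^*$ is a polynomial in $N$ for normal $N$), and the displayed computation showing $NH$ normal checks out. In part (2), reading $ABA$ as $(AB)A=A(BA)$ and applying Fuglede--Putnam to get $(AB)^*A=A(BA)^*$, hence $H^2(BU)=(BU)H^2$, and then passing from commutation with $H^2$ to commutation with $H$ via the functional calculus for the positive semidefinite $H$, is precisely the classical route; it is also the same device the paper uses in the converse direction of its own Theorem 2.2 (Kaplansky), so your proof is fully in the spirit of the techniques the authors rely on elsewhere.
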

\begin{theorem}$([8])$
Let $A \in \mathbb{C}^{mn}$ and $B\in \mathbb{C}^{nm}.$ Then $AB$
and $BA$ are normal if and only if $A^*AB=BA^*A$ and $ABB^*=BB^*A.$
\end{theorem}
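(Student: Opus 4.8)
The plan is to prove both implications from the single associativity identity $ABA=ABA$, read as the intertwining relation $A(BA)=(AB)A$, combined with the Fuglede--Putnam theorem recalled in the introduction (in its rectangular form, valid for two normal operators of possibly different sizes joined by a rectangular intertwiner). Throughout, $AB$ is the $m\times m$ matrix and $BA$ is the $n\times n$ matrix, and the two required identities are equalities of $n\times m$ and $m\times n$ matrices respectively; I will track carefully which of $A^*A$ ($n\times n$), $AA^*$ ($m\times m$), $B^*B$ ($m\times m$), $BB^*$ ($n\times n$) occurs, since the rectangular shapes forbid naive cancellation.

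For the necessity direction, I would assume $AB$ and $BA$ are normal. From $A(BA)=(AB)A$ and the normality of both $AB$ and $BA$, Fuglede--Putnam (with normal operators $AB,\,BA$ and intertwiner $A$) yields $(AB)^*A=A(BA)^*$, that is $B^*A^*A=AA^*B^*$; taking adjoints gives $A^*AB=BAA^*$, the first required identity. Reading the same triple product as $B(AB)=(BA)B$ and applying Fuglede--Putnam with intertwiner $B$ gives $(BA)^*B=B(AB)^*$, i.e.\ $A^*B^*B=BB^*A^*$; taking adjoints yields $ABB^*=B^*BA$, the second required identity.

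For the sufficiency direction, I would assume $A^*AB=BAA^*$ and $ABB^*=B^*BA$ and verify the two normality equations by direct substitution. For $AB$,
\begin{align*}
(AB)^*(AB) &= B^*(A^*AB) = B^*(BAA^*) = B^*BAA^*,\\
(AB)(AB)^* &= (ABB^*)A^* = (B^*BA)A^* = B^*BAA^*,
\end{align*}
so $(AB)^*(AB)=(AB)(AB)^*$ and $AB$ is normal. For $BA$, using both identities in turn,
\begin{align*}
(BA)^*(BA) &= A^*(B^*BA) = A^*(ABB^*) = (A^*AB)B^*\\
&= BAA^*B^* = (BA)(BA)^*,
\end{align*}
so $BA$ is normal as well.

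The sufficiency computation is routine, being only bracketing and substitution. The step I expect to be the crux is the necessity direction: one must spot the correct intertwinings $A(BA)=(AB)A$ and $B(AB)=(BA)B$, apply Fuglede--Putnam to the genuinely normal factors $AB$ and $BA$ rather than to $A$ or $B$, and then transpose by taking adjoints to land on the exact placement of stars in the two displayed identities. The rectangular setting adds no analytic difficulty, since all matrices are bounded and the Fuglede--Putnam theorem holds verbatim; it imposes only the bookkeeping discipline of never interchanging $A^*A$ with $AA^*$ or $B^*B$ with $BB^*$.
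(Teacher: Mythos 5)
The paper states this theorem without proof, simply citing it from $[8]$ (Deutsch, Gibson and Schneider), so there is no internal argument to compare yours against; the closest relative inside the paper is the reverse implication in the proof of Theorem 2.2, which uses the same device of reading the triple product $ABA$ in two ways and invoking Fuglede--Putnam. Your argument is correct and is essentially the expected one. For necessity, the intertwinings $(AB)A=A(BA)$ and $(BA)B=B(AB)$, together with the Putnam form of the theorem (valid for normal operators on two different spaces joined by a rectangular intertwiner), give $(AB)^*A=A(BA)^*$ and $(BA)^*B=B(AB)^*$, whose adjoints are exactly your two identities; the sufficiency direction is the routine substitution you carry out, and both chains check.

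One point you should make explicit rather than silently absorb: the identities you actually prove, namely $A^*AB=BAA^*$ and $ABB^*=B^*BA$, are not the ones printed in the statement, which reads $A^*AB=BA^*A$ and $ABB^*=BB^*A$. The printed versions are dimensionally inconsistent when $m\neq n$ (for instance $BA^*A$ multiplies the $n\times m$ matrix $B$ on the right by the $n\times n$ matrix $A^*A$, which requires $m=n$), and they are false even in the square case: taking $B=I$ and $A$ non-normal satisfies $A^*AB=BA^*A$ and $ABB^*=BB^*A$ trivially, while $AB=A$ is not normal. So the statement as printed is a misquotation of $[8]$, and your corrected reading is the right one; your proof establishes the theorem in its correct form, but a referee would want you to flag the discrepancy rather than pass over it.
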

\par \vskip
0.2 cm We show here the main results of this paper. Our intention is
to study
 some conditions for which the product of operators will
be hyponormal and $k$-quasihyponormal or $(k,p)$-quasihyponormal.
\begin{proposition}
Let $A=U|A|\in \mathcal{B}(\mathcal{H})$ with $U$ is unitary  and
let $B\in \mathcal{B}(\mathcal{H})$ such that $|A|BU=BU|A|.$ The
following properties hold \par \vskip 0.2 cm \noindent (1)\; If $UB$
is hyponormal, then $AB$ is hyponormal.\par \vskip 0.2 cm \noindent
(2)\; If $BU$ is hyponormal, then $BA$ is hyponormal.\par \vskip 0.2
cm \noindent (3)\;If $UB$ is quasihyponormal, then $AB$ is
quasihyponormal.\par \vskip 0.2 cm \noindent (4)\;\;If $BU$ is
quasihyponormal, then $BA$ is quasihyponormal.
\end{proposition}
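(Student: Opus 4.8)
The plan is to reduce all four assertions to a single auxiliary fact: if $P\ge 0$ commutes with an operator $W\in\mathcal{B}(\mathcal{H})$, then $WP$ inherits the (quasi)hyponormality of $W$. Note first that $PW=WP$ forces $PW^{*}=W^{*}P$ (take adjoints, using $P=P^{*}$), so $P$, and with it every power $P^{m}$, commutes with $W$, $W^{*}$, and hence with $W^{*}W$, $WW^{*}$, $W^{*2}W^{2}$ and $(W^{*}W)^{2}$. The engine of the whole argument is the elementary fact that the product of two commuting positive operators is again positive.

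Next I would record the two factorizations that put the hypotheses into this form. For $BA$ this is immediate: since $A=U|A|$ we have $BA=(BU)|A|$, and the standing assumption $|A|BU=BU|A|$ says precisely that $P:=|A|$ commutes with $W:=BU$. For $AB$ a short conjugation is needed: writing $AB=U|A|B$ and using $|A|B=BU|A|U^{*}$ (the hypothesis multiplied by $U^{*}$ on the right) gives
\[
AB=U\,BU|A|U^{*}=(UB)\big(U|A|U^{*}\big).
\]
Here $P:=U|A|U^{*}\ge 0$, since it equals $(U|A|^{1/2})(U|A|^{1/2})^{*}$, and $W:=UB$; the same relation $|A|BU=BU|A|$ yields $PW=WP$. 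Thus statements (1),(3) concern $WP$ with $W=UB$ and statements (2),(4) concern $WP$ with $W=BU$, the hypotheses on $UB$ respectively $BU$ being exactly the (quasi)hyponormality of $W$.

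It then remains to prove the auxiliary fact. Putting $T=WP$ with $PW=WP$, the commutation gives directly $T^{*}T=W^{*}WP^{2}$ and $TT^{*}=WW^{*}P^{2}$, so that
\[
T^{*}T-TT^{*}=(W^{*}W-WW^{*})P^{2};
\]
since $P^{2}\ge 0$ commutes with the positive operator $W^{*}W-WW^{*}$ (hyponormality of $W$), the right side is a product of two commuting positive operators, hence positive, and $T$ is hyponormal. For the quasihyponormal case the same bookkeeping yields $T^{2}=W^{2}P^{2}$, whence $T^{*2}T^{2}=W^{*2}W^{2}P^{4}$ and $(T^{*}T)^{2}=(W^{*}W)^{2}P^{4}$, so
\[
T^{*2}T^{2}-(T^{*}T)^{2}=\big(W^{*2}W^{2}-(W^{*}W)^{2}\big)P^{4},
\]
again a product of two commuting positive operators, giving quasihyponormality of $T$. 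If one prefers, the order-preservation Lemma~2.1 can be invoked to argue through the congruence $U^{*}(\cdot)U$ instead of computing directly.

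The genuinely nontrivial step is the factorization of $AB$: unlike $BA$, the hypothesis $|A|BU=BU|A|$ does not directly exhibit a positive operator commuting with $UB$, and one must conjugate $|A|$ by $U$ to produce $P=U|A|U^{*}$ and then verify $PW=WP$ for $W=UB$. Once this is in place the remaining computations are purely algebraic and rely only on the multiplication of commuting positive operators; in particular Hansen's inequality (Lemma~2.2) is \emph{not} needed here, because no fractional powers occur for plain hyponormal and quasihyponormal operators.
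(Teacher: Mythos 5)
Your proposal is correct. The factorizations $BA=(BU)\,|A|$ and $AB=(UB)\,\bigl(U|A|U^{*}\bigr)$ are verified correctly (indeed $PW=U|A|U^{*}UB=U|A|B=U(BU|A|)U^{*}\cdots=WP$ follows from the hypothesis), and the reduction to the lemma ``if $P\ge 0$ commutes with $W$ then $WP$ inherits hyponormality and quasihyponormality'' is sound, since $T^{*}T-TT^{*}=(W^{*}W-WW^{*})P^{2}$ and $T^{*2}T^{2}-(T^{*}T)^{2}=\bigl(W^{*2}W^{2}-(W^{*}W)^{2}\bigr)P^{4}$ are each products of two commuting positive operators. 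The paper exploits the same underlying conjugation by $U$, but it proceeds by direct norm manipulations: it inserts $U^{*}U=I$ to rewrite $(AB)^{*}x$ as $(UB)^{*}\bigl(U|A|U^{*}x\bigr)$, applies the norm form of hyponormality (resp.\ quasihyponormality) of $UB$ or $BU$ at the shifted vector $|A|x$ or $U|A|^{2}U^{*}x$, and then commutes $|A|$ back through $BU$; several of its ``$\leq$'' signs are in fact equalities. Your route buys a reusable, cleanly stated lemma and handles all four cases uniformly at the level of operator inequalities, at the modest cost of invoking the fact that a product of commuting positive operators is positive (via square roots); the paper's route is more pedestrian but avoids that fact, needing only that $P$ commutes with $W^{*}$ so that the defining norm inequality of $W$ can be evaluated at $Px$. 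You are also right that Hansen's inequality (Lemma~2.2) plays no role here, since no fractional powers appear.
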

\begin{proof}
(1) \;\;Suppose that $UB$ is hyponormal.Then \begin{eqnarray*}
\|(AB)^*x\|&=&\|B^*|A|U^*x\|\\&=&\|B^*U^*U|A|U^*x\| \\&=&
\|(UB)^*U|A|U^*x\|\\&\leq&\|UBU|A|U^*x\|\\&\leq&
\|U|A|BUU^*x\|\\&=&\|ABx\|.
\end{eqnarray*}
This shows that $AB$ is hyponormal. \par\vskip 0.2 cm \noindent
(2)\;\;Suppose that $BU$ is hyponormal.Then \begin{eqnarray*}
\|(BA)^*x\|&=&\||A|U^*B^*x\|\\&=&\||A|(BU)^*x\|
\\&=& \|(BU)^*|A|x\|\\&\leq&\|BU|A|x\|\\&=&\|BAx\|.
\end{eqnarray*}This shows that $BA$ is hyponormal.\par \vskip 0.2 cm
\noindent (3)\; Assume that $UB$ is quasihyponormal, then
\begin{eqnarray*}
\|(AB)^*(AB)x\|&=&\|B^*|A|^2Bx\|\\&\leq&\|B^*|A|^2BUU^*x\|\\&=&\|B^*BU|A|^2U^*x\|\\&=&\|B^*U^*UBU|A|^2U^*x\|\\&=&
\|(UB)^*(UB)U|A|^2U^*x\|\\&\leq&\|(UB)^2U|A|^2U^*x\|\;\;(\hbox{since}\;\;UB\;\;\hbox{is
quasihyponormal})\\&\leq&\|UBUBU|A|^2U^*x\|\\&\leq&
\|UBU|A|^2Bx\|\\&\leq&\| U|A|BU|A|Bx\|\\&=&\|(AB)^2x\|.
\end{eqnarray*}This shows that $AB$ is quasihyponormal.\par\vskip
0.2 cm \noindent (4)\;Assume that $BU$is quasihyponormal. Then
\begin{eqnarray*}
\|(BA)^*(BA)x\|&=&\|A^*B^*BAx\|\\&\leq&
\||A|(BU)^*BU|A|x\|\\&=&\|(BU)^*(BU)|A^2x)\|\\&\leq&\|(BU)^2|A|^2x\|\;\;(\hbox{since}\;\;BU\;\;\hbox{is
quasihyponormal})\\&\leq&
\|BUBU|A|^2x\|\\&\leq&\|BU|A|.BU|A|x\|\\&=&\|(BA)^2x\|.
\end{eqnarray*}
This shows that $BA$ is quasihyponormal.
\end{proof}
\begin{proposition}
Let $A$ and $B \in \mathcal{B}(\mathcal{H})$ are hyponormal
operators.   If $BA^*=A^*B$, then $AB$  and $BA$ are
$k$-quasihyponormal.
\end{proposition}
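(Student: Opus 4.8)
The plan is to establish the stronger fact that both $AB$ and $BA$ are \emph{hyponormal}; this already yields $k$-quasihyponormality, since if, say, $AB$ is hyponormal then $(AB)^*(AB)-(AB)(AB)^*\geq 0$, and applying Lemma 1.1 (1) with $C=(AB)^k$ gives $(AB)^{*k}\big((AB)^*(AB)-(AB)(AB)^*\big)(AB)^k\geq 0$, which is exactly the defining inequality (this is also the content of the inclusion $\mathcal{HN}\subset\mathcal{Q}(p,k)$ recorded in Remark 1.1). So it suffices to prove hyponormality of the two products. A useful preliminary remark is that the single hypothesis $BA^*=A^*B$ in fact supplies two commutation relations: taking adjoints gives $AB^*=B^*A$, and both will be needed.

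For $AB$ I would start from $(AB)^*(AB)=B^*A^*AB$ and run a chain of three comparisons. Conjugating the hyponormality $A^*A\geq AA^*$ of $A$ by $B$ (Lemma 1.1 (1) with $C=B$) gives $B^*A^*AB\geq B^*AA^*B$. Next I reshape the middle term using the two commutation relations: substituting $A^*B=BA^*$ and then $B^*A=AB^*$ turns $B^*AA^*B$ into $AB^*BA^*$. Finally, conjugating the hyponormality $B^*B\geq BB^*$ of $B$ by $A^*$ (Lemma 1.1 (1) with $C=A^*$) gives $AB^*BA^*\geq ABB^*A^*=(AB)(AB)^*$. Concatenating these three inequalities yields $(AB)^*(AB)\geq (AB)(AB)^*$, so $AB$ is hyponormal.

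The argument for $BA$ is the mirror image, with the conjugating operators interchanged: from $(BA)^*(BA)=A^*B^*BA$, conjugate $B^*B\geq BB^*$ by $A$ to obtain $A^*B^*BA\geq A^*BB^*A$; rewrite $A^*BB^*A=BA^*AB^*$ using $A^*B=BA^*$ and $B^*A=AB^*$; then conjugate $A^*A\geq AA^*$ by $B^*$ to reach $BA^*AB^*\geq BAA^*B^*=(BA)(BA)^*$. Hence $BA$ is hyponormal as well, and the proposition follows from the reduction in the first paragraph.

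I expect the only point requiring care to be the middle rearrangement step in each chain: one must apply the two commutation identities in the right order so that the resulting operator is genuinely of the conjugated form $C^*(B^*B)C$ (respectively $C^*(A^*A)C$) to which Lemma 1.1 (1) can be applied with the correct choice of $C$. This is bookkeeping rather than a genuine obstacle—once the term is correctly reshaped, the two hyponormality inequalities slot in directly and the chains close.
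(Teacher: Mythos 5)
Your proof is correct, and at its core it uses the same two ingredients as the paper's (hyponormality of $A$ and of $B$, plus the relation $BA^*=A^*B$ and its adjoint $AB^*=B^*A$), but it is packaged differently and actually proves more. The paper verifies the defining inequality of $k$-quasihyponormality directly at the vector $w=(AB)^k x$, via the norm chain $\|(AB)^*(AB)^kx\|=\|B^*A^*w\|\le\|BA^*w\|=\|A^*Bw\|\le\|ABw\|=\|(AB)^{k+1}x\|$, using hyponormality of $B$ at the vector $A^*w$ and of $A$ at the vector $Bw$; the $BA$ case is the symmetric computation with $AB^*=B^*A$. Squaring, this is precisely your operator chain $(AB)(AB)^*\le AB^*BA^*=B^*AA^*B\le (AB)^*(AB)$ tested against $w$, so the two arguments are the same inequalities in different clothing. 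What your organization buys is the explicit stronger conclusion that $AB$ and $BA$ are \emph{hyponormal} (a Patel--Ramanujan--type statement the paper's computation contains implicitly at $k=0$ but never records), after which $k$-quasihyponormality --- and indeed $(p,k)$-quasihyponormality --- follows by a single conjugation by $(AB)^k$. Two bookkeeping remarks: the results you cite are Lemma 2.1 and Remark 2.1 in the paper's numbering, not 1.1; and your reduction implicitly reads Definition 2.1(3) as $A^{*k}(A^*A-AA^*)A^k\ge 0$ (the printed $A^k$ on the left is evidently a typo), which is equivalent to the norm form $\|A^*A^kx\|\le\|A^{k+1}x\|$ that the paper's proof actually establishes.
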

\begin{proof}
  Let $x \in \mathcal{H}$,
     \begin{eqnarray*}
     \left\|(AB)^{*}(AB)^kx\right\| &=&
     \left\|B^{*}A^{*}(AB)^kx\right\|\\&
                                  \leq& \left\|BA^{*}(AB)^kx\right\| ( \hbox{since}\;\;B\;\hbox{is hyponormal})\\
                                  & \leq& \left\|A^{*}B(AB)^kx\right\|\\
                                    &\leq& \left\|AB(AB)^kx\right\| ( \hbox{since}\;\;A\;\hbox{is hyponormal})\\
                                     &\leq&
                                     \left\|\left(AB\right)^{k+1}x\right\|\end{eqnarray*}
                                     we even have evidence to
                                    $k$- quasi-hyponormal.

\end{proof}

\begin{proposition}
Let $A$ and $B \in \mathcal{B}(\mathcal{H})$  such that $A$ and $B$
 are doubly commutative $k$-quasi-hyponormal operators then $AB$ is
$k$- quasi-hyponormal.
\end{proposition}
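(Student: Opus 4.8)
The plan is to work with the norm form of $k$-quasihyponormality used in the proofs above: an operator $T\in\mathcal{B}(\mathcal{H})$ is $k$-quasihyponormal exactly when $\|T^{*}T^{k}x\|\le\|T^{k+1}x\|$ for every $x\in\mathcal{H}$ (this is simply $\langle T^{*k}(T^{*}T-TT^{*})T^{k}x,\,x\rangle\ge 0$ rewritten via $\|T\,T^{k}x\|^{2}-\|T^{*}T^{k}x\|^{2}\ge0$). Thus the goal reduces to establishing
$$\|(AB)^{*}(AB)^{k}x\|\le\|(AB)^{k+1}x\|\qquad\text{for all }x\in\mathcal{H}.$$

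First I would record the algebraic consequences of double commutativity. By definition $A$ and $B$ are doubly commutative means $AB=BA$ and $AB^{*}=B^{*}A$; taking adjoints gives $B^{*}A^{*}=A^{*}B^{*}$ and $BA^{*}=A^{*}B$. Hence each of $A,A^{*}$ commutes with each of $B,B^{*}$, so $A^{m}$ commutes with $B^{n}$ and with $B^{*n}$ for all $m,n$. In particular the powers split, $(AB)^{n}=A^{n}B^{n}$, and $(AB)^{*}=B^{*}A^{*}$, so that $(AB)^{*}(AB)^{k}x=B^{*}A^{*}A^{k}B^{k}x$ while $(AB)^{k+1}x=A^{k+1}B^{k+1}x$.

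The heart of the argument is then a chain of inequalities that peels off the two operators in turn. Since $B^{*}$ commutes with $A^{*}$ and with $A^{k}$, I rewrite $B^{*}A^{*}A^{k}B^{k}x=A^{*}A^{k}\bigl(B^{*}B^{k}x\bigr)$ and apply the $k$-quasihyponormality of $A$ to the vector $z=B^{*}B^{k}x$, obtaining $\|A^{*}A^{k}z\|\le\|A^{k+1}z\|$. Because $A^{k+1}$ commutes with both $B^{*}$ and $B^{k}$, I then move it to the right, $A^{k+1}B^{*}B^{k}x=B^{*}B^{k}\bigl(A^{k+1}x\bigr)$, and apply the $k$-quasihyponormality of $B$ to the vector $w=A^{k+1}x$, obtaining $\|B^{*}B^{k}w\|\le\|B^{k+1}w\|$. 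Finally $B^{k+1}A^{k+1}x=A^{k+1}B^{k+1}x=(AB)^{k+1}x$. Concatenating these steps yields
$$\|(AB)^{*}(AB)^{k}x\|=\|A^{*}A^{k}B^{*}B^{k}x\|\le\|A^{k+1}B^{*}B^{k}x\|=\|B^{*}B^{k}A^{k+1}x\|\le\|B^{k+1}A^{k+1}x\|=\|(AB)^{k+1}x\|,$$
which is exactly the desired estimate, so $AB$ is $k$-quasihyponormal.

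The computation is routine; the only point that genuinely requires care — and the place where the hypothesis is used in full — is the reordering. Ordinary commutativity $AB=BA$ alone would not permit sliding $B^{*}$ past $A$ and $A^{*}$; it is precisely the second relation $AB^{*}=B^{*}A$ (equivalently $A^{*}B=BA^{*}$) that isolates the blocks $A^{*}A^{k}$ and $B^{*}B^{k}$, so that the one-sided $k$-quasihyponormal inequalities for $A$ and for $B$ can each be applied to a single intermediate vector. Thus double commutativity is exactly the strength needed, and neither Hansen's inequality nor Lemma~1.1 is required for this proof.
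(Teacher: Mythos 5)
Your argument is correct and is essentially the paper's own proof: both reduce to the norm inequality $\|(AB)^*(AB)^kx\|\leq\|(AB)^{k+1}x\|$ and establish it by the same two-step chain, first applying the $k$-quasihyponormality of $A$ to $B^*B^kx$ and then that of $B$ to $A^{k+1}x$, using double commutativity to reorder the factors. Your write-up is in fact more careful than the paper's one-line display, which contains a typographical slip ($B^kB^*$ where $B^*B^k$ is meant) and omits the intermediate commutation step you make explicit.
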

\begin{proof}
Let $x\in \mathcal{H},$
$$\|(AB)^*(AB)^kx\|=\|A^*A^kB^kB^*x\|\leq \|A^{k+1}B^*B^kx\|\leq \|A^{k+1}B^{k+1}x\|=\|(AB)^{k+1}x\|$$
\end{proof}
\begin{proposition}
Let $A$ and $B\in \mathcal{B}(\mathcal{H})$ are $k$-quasihyponormal
operators for some positive integer $k$. The following statements
hold\par\vskip 0.2 cm \noindent (1)\;If $A^*A^kB=BA^*A^k$ and
$A^jB^j=(AB)^j$  for $j\in \{k,k+1\}$,then $AB$ is $k$-
quashyponormal.
\par\vskip 0.2 cm \noindent (2)\;If $B^*B^kA=AB^*B^k$ and
$B^jA^j=(BA)^j$  for $j\in \{k,k+1\}$,then $BA$ is $k$-
quashyponormal.
\end{proposition}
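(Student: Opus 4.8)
The plan is to reduce the $k$-quasihyponormality of a product to the single-vector inequality it is equivalent to, and then to run essentially the computation of Proposition 1.3 but with the double-commutativity replaced by the weaker block-commutation hypothesis. Recall that, directly from Definition 1.1(3), an operator $T$ is $k$-quasihyponormal exactly when $\|T^*T^kx\|\leq\|T^{k+1}x\|$ for every $x\in\mathcal{H}$; this is the reformulation already exploited in the proofs of Propositions 1.2 and 1.3. Since statement (2) is obtained from (1) by interchanging the roles of $A$ and $B$ (and correspondingly the hypotheses $A^*A^kB=BA^*A^k$, $A^jB^j=(AB)^j$ with $B^*B^kA=AB^*B^k$, $B^jA^j=(BA)^j$), I would prove (1) in full and then note that (2) is the symmetric statement.

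For (1), I would start from $\|(AB)^*(AB)^kx\|$ and use the factorization hypothesis $(AB)^k=A^kB^k$ (the case $j=k$) together with $(AB)^*=B^*A^*$ to rewrite it as $\|B^*A^*A^kB^kx\|$. The first key move is to observe that the commutation hypothesis $A^*A^kB=BA^*A^k$ promotes, by iteration, to $A^*A^kB^m=B^mA^*A^k$ for every positive integer $m$, so that the self-adjoint block $A^*A^k$ commutes with every power of $B$. This lets me slide $B^k$ to the left past $A^*A^k$, obtaining $\|B^*B^kA^*A^kx\|$. Now the two $k$-quasihyponormality inequalities apply in turn: first $\|B^*B^k(A^*A^kx)\|\leq\|B^{k+1}(A^*A^kx)\|$ by $k$-quasihyponormality of $B$, then, after sliding $B^{k+1}$ back across $A^*A^k$, $\|A^*A^k(B^{k+1}x)\|\leq\|A^{k+1}(B^{k+1}x)\|$ by $k$-quasihyponormality of $A$. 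The final quantity $\|A^{k+1}B^{k+1}x\|$ equals $\|(AB)^{k+1}x\|$ by the case $j=k+1$ of the factorization hypothesis, so chaining the lines yields $\|(AB)^*(AB)^kx\|\leq\|(AB)^{k+1}x\|$, i.e. $AB$ is $k$-quasihyponormal.

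The step I expect to be the crux is the bookkeeping of which object may be commuted past which. Unlike Proposition 1.3, I do not have $A,A^*$ commuting with $B,B^*$ individually, so I can never move a bare $A$, $A^*$, or $B^*$; only the block $A^*A^k$ may be shuttled, and only across powers of $B$. The computation must therefore be arranged so that at every transposition it is precisely $A^*A^k$ that meets a power of $B$ (never the factors $A^*$ and $A^k$ separately, and never $B^*$), and so that the two single-operator estimates are invoked with $A^*A^k$ and $B^*B^k$ kept intact as blocks. Once this ordering is fixed, every line is either an equality from the promoted commutation or from the factorization hypotheses, or an inequality from the $k$-quasihyponormality of $A$ or of $B$, and no further estimate is required. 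Part (2) then follows by the identical four-line chain with $A$ and $B$ interchanged.
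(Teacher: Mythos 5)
Your proposal is correct and follows essentially the same route as the paper's own proof: rewrite $\|(AB)^*(AB)^kx\|$ as $\|B^*A^*A^kB^kx\|$ via the factorization hypotheses, iterate $A^*A^kB=BA^*A^k$ to shuttle the block $A^*A^k$ past powers of $B$, apply the $k$-quasihyponormality of $B$ to the vector $A^*A^kx$ and of $A$ to the vector $B^{k+1}x$, and finish with $(AB)^{k+1}=A^{k+1}B^{k+1}$. Your explicit remark that only the intact block $A^*A^k$ may be commuted (never $A^*$, $A^k$, or $B^*$ separately) is exactly the discipline the paper's computation observes, if tacitly.
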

\begin{proof}
(1)
\begin{eqnarray*}\|(AB)^*(AB)^kx\|&=&\|B^*A^*A^kB^kx\|\\&=&\|B^*B^kA^*A^kx
\|\\&\leq&\| B^{k+1}A^*A^kx \|\;\;\;(\hbox
{since}\;\;B\;\;\hbox{is}\;
k-\hbox{quasihyponormal}\\&\leq&\|A^*A^kB^{k+1}\|\\&\leq&\|
A^{k+1}B^{k+1}x\|\quad(\hbox{since}\;A\;\hbox{is}\;k-\hbox{
quasighyponormal})\\&\leq& \|(AB)^{k+1}x\|\;\quad \hbox{for
all}\;x\in \mathcal{H}.
\end{eqnarray*}
\noindent (2)
\begin{eqnarray*}\|(BA)^*(BA)^kx\|&=&\|A^*B^*B^kA^kx\|\\&=&\|A^*A^kB^*B^kx
\|\\&\leq&\| A^{k+1}B^*B^kx \|\;\;\;(\hbox
{since}\;\;A\;\;\hbox{is}\;k-\hbox{
quasihyponormal}\\&\leq&\|B^*B^kA^{k+1}x\|\\&\leq&\|
B^{k+1}A^{k+1}x\|\;\quad(\hbox{since}\;B\;\hbox{is}\;k-\hbox{
quasihyponormal})\\&=& \|(BA)^{k+1}x\|\;\quad\hbox{for all}\;x\in
\mathcal{H}.
\end{eqnarray*}
\end{proof}

\begin{proposition}
Let $A$ and $B \in \mathcal{B}(\mathcal{H})$ such that $A$ is normal
and $AB$ is quasinormal,then
$$A^*AB=BA^*A\Longrightarrow  BA\;\;\hbox{is quasinormal}.$$
\end{proposition}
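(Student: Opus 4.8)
The plan is to follow the template of the preceding proof of Kaplansky's theorem, but to organise everything around the characterisation of quasinormality recorded in the introduction: an operator $T$ is quasinormal iff $(T^{*}T)^{2}=T^{*2}T^{2}$, equivalently iff $\|T^{2}x\|=\|T^{*}Tx\|$ for all $x$. The first move is to manufacture a single positive operator that commutes with everything in sight. I set $N=A^{*}A$. Since $A$ is normal, $N=AA^{*}$ and $N$ commutes with both $A$ and $A^{*}$; the hypothesis $A^{*}AB=BA^{*}A$ says precisely that $N$ commutes with $B$, and taking adjoints gives $NB^{*}=B^{*}N$. Hence $N$ commutes with each of $A,A^{*},B,B^{*}$, and therefore with every word in these operators, in particular with $BA$, $(BA)^{*}$, $(BA)^{2}$ and $(BA)^{*}(BA)$. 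This commuting positive factor is what transfers information from $AB$ to $BA$.

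Next I would feed the quasinormality of $AB$, written as $\|(AB)^{2}y\|=\|(AB)^{*}(AB)y\|$, with the choice $y=Ax$. The left side collapses to $\|A(BA)^{2}x\|$ because $(AB)^{2}A=A(BA)^{2}$, and the right side simplifies, using $A^{*}AB=BA^{*}A$, to $\|B^{*}BA^{*}A^{2}x\|$. A direct computation with $AA^{*}=A^{*}A$ and the commutation relations of the first step shows that $A(BA)^{*}(BA)=B^{*}BA^{*}A^{2}$ as well, so this very same quantity also equals $\|A(BA)^{*}(BA)x\|$. Combining the two gives the key norm identity
\[
\|A(BA)^{2}x\|=\|A(BA)^{*}(BA)x\|\qquad\text{for all }x\in\mathcal{H}.
\]

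I would then square this identity and rewrite each side via $\|Au\|^{2}=\langle Nu,u\rangle$. Setting $W=(BA)^{*2}(BA)^{2}-\big((BA)^{*}(BA)\big)^{2}$ and pushing $N$ past $(BA)^{2}$ and $(BA)^{*}(BA)$ (legitimate by the commutativity established above) turns the identity into $\langle WNx,x\rangle=0$ for all $x$. Since $W$ is self-adjoint and commutes with $N$, the product $WN$ is self-adjoint, so $\langle WNx,x\rangle=0$ forces $WN=0$; consequently $W$ vanishes on $\overline{\mathcal{R}(N)}=\mathcal{N}(A^{*}A)^{\perp}=\mathcal{N}(A)^{\perp}$.

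The remaining step handles $\mathcal{N}(A)$, which is exactly where the Kaplansky argument had to work hardest; here it is painless. For $x\in\mathcal{N}(A)$ one has $BAx=0$, hence $(BA)^{2}x=0$ and $(BA)^{*}(BA)x=0$, so $Wx=0$. Thus $W$ vanishes on both $\mathcal{N}(A)$ and $\mathcal{N}(A)^{\perp}$, whence $W=0$ on all of $\mathcal{H}$. This is exactly $(BA)^{*2}(BA)^{2}=\big((BA)^{*}(BA)\big)^{2}$, i.e. $BA$ is quasinormal. The one genuinely delicate point is the passage from the scalar identity $\langle WNx,x\rangle=0$ to $W=0$, that is, the cancellation of $N$; it is resolved by the commutativity of $N$ with $W$ together with the triviality of $W$ on $\mathcal{N}(A)$, while everything else is bookkeeping with the relations of the first step.
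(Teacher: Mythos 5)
Your intermediate work is essentially sound: the commutation of $N=A^{*}A$ with each of $A,A^{*},B,B^{*}$, the norm identity $\|A(BA)^{2}x\|=\|A(BA)^{*}(BA)x\|$, the passage to $WN=0$, and the separate treatment of $\mathcal{N}(A)$ all check out, so you really do prove the operator identity $(BA)^{*2}(BA)^{2}=\bigl((BA)^{*}(BA)\bigr)^{2}$. The gap is the very last inference: this identity does \emph{not} imply quasinormality. The parenthetical ``equivalently $(A^{*}A)^{2}=A^{*2}A^{2}$'' in the introduction is correct only in one direction (quasinormal $\Rightarrow$ the identity); the converse fails even for bounded operators. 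Embry's characterization needs $T^{*n}T^{n}=(T^{*}T)^{n}$ for all $n$, and Jab\l{}o\'{n}ski, Jung and Stochel (reference [9] of this paper) showed that $n=2,3$ together suffice but $n=2$ alone does not: the bounded weighted shift on a directed tree with one branching vertex given by $Se_{u}=e_{a_{1}}+e_{b_{1}}$, $Se_{a_{j}}=e_{a_{j+1}}$, $Se_{b_{j}}=\sqrt{3}\,e_{b_{j+1}}$ satisfies $S^{*2}S^{2}=(S^{*}S)^{2}$ yet is not quasinormal. So what you have established is a necessary condition that is strictly weaker than the assertion. (Your use of the analogous identity for $AB$ is harmless, since there you only invoke the true direction.)

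The paper's own proof sidesteps this entirely and is worth comparing: writing $A=U|A|=|A|U$ with $U$ unitary, the hypothesis $A^{*}AB=BA^{*}A$ reads $|A|^{2}B=B|A|^{2}$ and hence $|A|B=B|A|$ by taking square roots, whence $BA=B|A|U=|A|BU=U^{*}(AB)U$; thus $BA$ is unitarily equivalent to $AB$ and quasinormality transfers verbatim. To salvage your approach you would have to target the defining identity $(BA)\bigl((BA)^{*}(BA)\bigr)=\bigl((BA)^{*}(BA)\bigr)(BA)$ itself rather than a squared norm identity, and in practice that again forces you through the unitary conjugation $BA=U^{*}(AB)U$.
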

\begin{proof}
Let $A=U|A|$ with $U$ unitary. Since $A^*AB=BA^*A$ we have
$|A|B=B|A|.$ These facts and the quasinormality of $AB$ give
\begin{eqnarray*}
(BA)(BA)^*(BA)&=&U^*(AB)U\big(U^*(AB)U\big)^*(U^*(AB)U)\\&=&U^*(AB)(AB)^*(AB)U\\&=&
U^*(AB)^*(AB)^2U\\&=&(U^*(AB)U)^*(U^*(AB)U)^2\\&=&(BA)^*(BA)^2.
\end{eqnarray*}
\end{proof}

\begin{remark}
The reverse implication does not hold in the previous result (even
if $A$ is self-adjoint) as shown in the following
example\end{remark}\par\vskip 0.2 cm \noindent
\begin{example}
Let $A$ and $B$ be acting on the standard basis $(e_n)$ of
$\ell^2(\N)$ by: $$Ae_n=\alpha_ne_n \;\;\hbox{and}\;\;
Be_n=e_{n+1},~\forall n\geq 1$$ respectively. Assume further that
$\alpha_n$ is bounded, \textit{real-valued} and \textit{positive},
for all $n$. Hence $A$ is self-adjoint (hence normal!) and positive.
Then $$ABe_n=\alpha_{n+1}e_{n+1},~\forall n\geq 1.$$ For
convenience, let us carry out the calculations as infinite matrices.
Then $$
 AB=\left(\begin{matrix} 0 & 0 &  &  & &\text{\Large{0}} \\ \alpha_1 & 0 &
0 &  &
\\ 0 & \alpha_2 & 0 & 0 &
 \\
 & 0 & \alpha_3 & 0 & \ddots \\
 &  & 0 & \ddots & 0 & \\
\text{\Large{0}} &  &  & \ddots & \ddots & \ddots
\end{matrix}\right) \text{ so that } (AB)^*=\left(\begin{matrix} 0 & \alpha_1 &  &  & &\text{\Large{0}} \\ 0 & 0 &
\alpha_2 &  &
\\ 0 & 0 & 0 & \alpha_3 &
 \\
 & 0 & 0 & 0 & \ddots \\
 &  & 0 & \ddots & 0 & \\
\text{\Large{0}} &  &  & \ddots & \ddots & \ddots
\end{matrix}\right).$$
Hence $$(AB)^2=\left(\begin{matrix} 0 & 0 &  &  & &\text{\Large{0}} \\
0 & 0 & 0 &  &
\\  \alpha_1\alpha_2 & 0 & 0 &
 \\
 0 & \alpha_2\alpha_3 & 0 & \ddots \\
 &  & 0 & \ddots & 0 & \\
\text{\Large{0}} &  &  & \ddots & \ddots & \ddots
\end{matrix}\right) \text{ and }\left[(AB)^*\right]^2=\left(\begin{matrix} 0 & 0 &\alpha_1\alpha_2 &  &  & &\text{\Large{0}} \\ 0 & 0 &
0 & \alpha_2\alpha_3 &
\\ 0 & 0 & 0 & \ &
 \\
 & 0 & 0 & 0 & \ddots \\
 &  & 0 & \ddots & 0 & \\
\text{\Large{0}} &  &  & \ddots & \ddots & \ddots
 \end{matrix}\right).$$
 so
$$(AB)^*AB=\left(\begin{matrix} \alpha_1^2 & 0 &  &  & &\text{\Large{0}} \\
0 &\alpha_2^2 & 0 &  &
\\ 0 & 0 & \alpha_3^2 & 0 &
 \\
 & 0 & 0 & \ddots & \ddots \\
 &  & 0 & \ddots & \ddots & \\
\text{\Large{0}} &  &  & \ddots & \ddots & \ddots
\end{matrix}\right).$$  { this implies } $$ \left[(AB)^*AB\right]^2=\left(\begin{matrix} \alpha_1^4 & 0 &  &  & &\text{\Large{0}} \\ 0
&\alpha_2^4 & 0 &  &
\\ 0 & 0 & \alpha_3^4 & 0 &
 \\
 & 0 & 0 & \ddots & \ddots \\
 &  & 0 & \ddots & \ddots & \\
\text{\Large{0}} &  &  & \ddots & \ddots & \ddots
\end{matrix}\right).$${ and }\\

$$
\left[(AB)^*\right]^2\left[AB\right]^2=\left(\begin{matrix}
\left(\alpha_1\alpha_2\right)^2 & 0 &  &  & &\text{\Large{0}} \\ 0
&\left(\alpha_2\alpha_3\right)^2 & 0 &  &
\\ 0 & 0 & \left(\left(\alpha_3\alpha_4\right)\right)^2 & 0 &
 \\
 & 0 & 0 & \ddots & \ddots \\
 &  & 0 & \ddots & \ddots & \\
\text{\Large{0}} &  &  & \ddots & \ddots & \ddots
\end{matrix}\right).$$
It thus becomes clear that $AB$ is quasi hyponormal iff
$\alpha_n\leq \alpha_{n+1}$.

Similarly
\[BAe_n=\alpha_{n}e_{n+1},~\forall n\geq 1.\]
Whence the matrix representing $BA$ is given by:
\[BA=\left(\begin{matrix} 0 & 0 &  &  & &\text{\Large{0}} \\ \alpha_2 & 0 &
0 &  &
\\ 0 & \alpha_3 & 0 & 0 &
 \\
 & 0 & \alpha_4 & 0 & \ddots \\
 &  & 0 & \ddots & 0 & \\
\text{\Large{0}} &  &  & \ddots & \ddots & \ddots
\end{matrix} \right)\text{ so that } (BA)^*=\left(\begin{matrix} 0 & \alpha_2 &  &  & &\text{\Large{0}} \\ 0 & 0 &
\alpha_3 &  &
\\ 0 & 0 & 0 & \alpha_4 &
 \\
 & 0 & 0 & 0 & \ddots \\
 &  & 0 & \ddots & 0 & \\
\text{\Large{0}} &  &  & \ddots & \ddots & \ddots
\end{matrix}\right).\]
Therefore,
\[\left(BA\right)^2=\left(\begin{matrix} 0 & 0 &  &  & &\text{\Large{0}} \\ 0 & 0 &
0 &  &
\\  \alpha_2\alpha_3 & 0 & 0 &
 \\
 0 & \alpha_3\alpha_4 & 0 & \ddots \\
 &  & 0 & \ddots & 0 & \\
\text{\Large{0}} &  &  & \ddots & \ddots & \ddots
\end{matrix}\right),
\left[(BA)^*\right]^2=\left(\begin{matrix} 0 & 0 &\alpha_2\alpha_3 &
& & &\text{\Large{0}} \\ 0 & 0 & 0 & \alpha_3\alpha_4 &
\\ 0 & 0 & 0 & \ &
 \\
 & 0 & 0 & 0 & \ddots \\
 &  & 0 & \ddots & 0 & \\
\text{\Large{0}} &  &  & \ddots & \ddots & \ddots
 \end{matrix}\right).\]{and}
\[(BA)^*BA=\left(\begin{matrix} \alpha_2^2 & 0 &  &  & &\text{\Large{0}} \\ 0
&\alpha_3^2 & 0 &  &
\\ 0 & 0 & \alpha_4^2 & 0 &
 \\
 & 0 & 0 & \ddots & \ddots \\
 &  & 0 & \ddots & \ddots & \\
\text{\Large{0}} &  &  & \ddots & \ddots & \ddots
\end{matrix}\right).\] \\this implies\\$$
\left[(BA)^*BA\right]^2=\left(\begin{matrix} \alpha_2^4 & 0 &  &  &
&\text{\Large{0}} \\ 0 &\alpha_3^4 & 0 &  &
\\ 0 & 0 & \alpha_4^4 & 0 &
 \\
 & 0 & 0 & \ddots & \ddots \\
 &  & 0 & \ddots & \ddots & \\
\text{\Large{0}} &  &  & \ddots & \ddots & \ddots
\end{matrix}\right).$$ and
$$\left[(BA)^*\right]^2\left[BA\right]^2=\left(\begin{matrix}
\left(\alpha_2\alpha_3\right)^2 & 0 &  &  & &\text{\Large{0}} \\ 0
&\left(\alpha_3\alpha_4\right)^2 & 0 &  &
\\ 0 & 0 & \left(\left(\alpha_4\alpha_5\right)\right)^2 & 0 &
 \\
 & 0 & 0 & \ddots & \ddots \\
 &  & 0 & \ddots & \ddots & \\
\text{\Large{0}} &  &  & \ddots & \ddots & \ddots
\end{matrix}\right).$$
Accordingly, $BA$ is quasi hyponormal if and only if  $\alpha_n\leq
\alpha_{n+1}$ (thankfully, this is the same condition for the
hyponormality of $AB$).

Finally,

$$ BA^2=\left(\begin{matrix} 0 & 0 &  &  & &\text{\Large{0}} \\ \alpha_1^2
& 0 & 0 &  &
\\ 0 & \alpha_2^2 & 0 & 0 &
 \\
 & 0 & \alpha_3^2 & 0 & \ddots \\
 &  & 0 & \ddots & 0 & \\
\text{\Large{0}} &  &  & \ddots & \ddots & \ddots
\end{matrix}\right)\neq A^2B=\left(\begin{matrix} 0 & 0 &  &  & &\text{\Large{0}} \\ \alpha_2^2 & 0 &
0 &  &
\\ 0 & \alpha_3^2 & 0 & 0 &
 \\
 & 0 & \alpha_4^2 & 0 & \ddots \\
 &  & 0 & \ddots & 0 & \\
\text{\Large{0}} &  &  & \ddots & \ddots & \ddots
\end{matrix}\right).$$

\end{example}

\begin{proposition}
Let $A=U|A|$ and $B \in \mathcal{B}(\mathcal{H})$ such that $A$ and
$B$ are quasinormal.If $BU$ is quasinormal and $|A|BU=BU|A|$,then
$BA$ is quasinormal.
\end{proposition}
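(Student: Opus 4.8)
The plan is to reduce the quasinormality of $BA$ to that of $BU$ by factoring out $|A|$. The starting observation is that, since $A=U|A|$, associativity of the (bounded) product gives $BA=B(U|A|)=(BU)|A|$. I would therefore set $T:=BU$ and $P:=|A|\geq 0$, and work with $BA=TP$, where $T$ is quasinormal by hypothesis. Here quasinormality is used in the form $TT^{*}T=T^{*}T^{2}$, as recorded in the introduction.

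Next I would record the two commutation facts needed. The hypothesis $|A|BU=BU|A|$ says exactly that $PT=TP$. Since $P=|A|$ is self-adjoint, taking adjoints in $PT=TP$ yields $PT^{*}=T^{*}P$, so $P$ commutes with $T^{*}$ as well. These two relations let me push every occurrence of $P$ in a product of $T$'s and $T^{*}$'s freely to one side, which is the whole mechanism of the computation.

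The core step is then a direct verification that $TP$ is quasinormal. I would first compute $(TP)^{*}(TP)=PT^{*}TP=T^{*}TP^{2}$, and then
\[
(TP)\big[(TP)^{*}(TP)\big]=TP\,T^{*}TP^{2}=TT^{*}T\,P^{3},\qquad \big[(TP)^{*}(TP)\big](TP)=T^{*}TP^{2}\,TP=T^{*}T^{2}P^{3},
\]
where in each case $PT=TP$ and $PT^{*}=T^{*}P$ are used to collect the three copies of $P$ on the right. Subtracting the two expressions leaves $(TT^{*}T-T^{*}T^{2})P^{3}$, which vanishes by the quasinormality of $T=BU$. Hence $(BA)(BA)^{*}(BA)=(BA)^{*}(BA)^{2}$, i.e. $BA$ is quasinormal.

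There is no genuine obstacle here: the entire content is the bookkeeping of sliding $P$ through the products, and once $PT^{*}=T^{*}P$ is secured from the self-adjointness of $|A|$ the rest is routine. I note in passing that this route uses only that $BU$ is quasinormal and that $|A|$ commutes with $BU$; the separate hypotheses that $A$ and $B$ be quasinormal are not invoked, which suggests the statement could be streamlined. If one instead wishes to parallel the proof of the preceding proposition, one can observe that $U^{*}(AB)U=|A|BU=BU|A|=BA$ (using $U^{*}U=I$ together with the commutation hypothesis), so that $BA$ is unitarily equivalent to $AB$ and hence quasinormal as soon as $AB$ is; but this merely shifts the burden to proving $AB$ quasinormal and is less direct than the factorization $BA=(BU)|A|$ above.
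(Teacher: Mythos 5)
Your proof is correct and follows essentially the same route as the paper: both factor $BA=(BU)|A|$, slide $|A|$ through the product using $|A|BU=BU|A|$ together with its adjoint relation, and then invoke the quasinormality of $BU$ in the form $(BU)(BU)^*(BU)=(BU)^*(BU)^2$. Your closing remark is also accurate --- the hypotheses that $A$ and $B$ themselves be quasinormal are not needed for this computation (the paper uses quasinormality of $A$ only to rewrite $BA$ as $B|A|U$, a detour your direct factorization avoids).
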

\begin{proof}
Let $A=U|A|$   be the polar decomposition of A with $U$ partial
isometry. Since $A$ is quasinormal we have $|A|U=U|A|.$ These facts
and the quasinormality of $AB$ give
\begin{eqnarray*}
(BA)(BA)^*(BA)&=&(B|A|U)\big(B|A|U\big)^*(B|A|U)\\&=&BU|A|^2(BU)^*BU|A|\\&=&
|A|^2(BU)(BU)^*(BU)|A|\\&=&|A|^2(BU)^*(BU)^2|A|\\&=&(BU|A|)^*(BU|A|)^2\\&=&(BA)^*(BA)^2.
\end{eqnarray*}
Thus $BA$ is quasinormal.
\end{proof}
\begin{proposition}
Let $A,B\in \mathcal{B}(\mathcal{H}) $ such that $A$ is normal and
$AB$ is paranormal.  Then $$A^*AB=BA^*A \Longrightarrow BA\;\;\hbox{
is paranormal}.$$
\end{proposition}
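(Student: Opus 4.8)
The plan is to follow the pattern of the preceding quasinormal proposition and exhibit $BA$ as a unitary conjugate of $AB$, after which the conclusion follows from the unitary invariance of paranormality. Writing the polar decomposition $A=U|A|$ with $U$ unitary, the normality of $A$ gives $A^*A=AA^*$, that is $|A|^2=U|A|^2U^*$, so that $|A|U=U|A|$. The hypothesis $A^*AB=BA^*A$ is exactly $|A|^2B=B|A|^2$, and from this I would first deduce the commutation relation $|A|B=B|A|$.

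With both relations $|A|U=U|A|$ and $|A|B=B|A|$ in hand, I would compute
$$U^*(AB)U=U^*\big(U|A|B\big)U=|A|BU=B|A|U=BU|A|=BA,$$
so that $BA=U^*(AB)U$ is unitarily equivalent to $AB$.

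Finally, I would invoke that paranormality is preserved under unitary equivalence: if $T$ is paranormal and $S=U^*TU$ with $U$ unitary, then for every $x$ one has $\|Sx\|=\|TUx\|$, $\|S^2x\|=\|T^2Ux\|$ and $\|x\|=\|Ux\|$, whence the defining inequality $\|Sx\|^2\leq\|S^2x\|\,\|x\|$ for $S$ is simply the paranormal inequality for $T$ evaluated at the vector $Ux$. Applying this to $T=AB$ (paranormal by hypothesis) and $S=BA$ yields that $BA$ is paranormal.

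The main obstacle is the passage from $|A|^2B=B|A|^2$ to $|A|B=B|A|$: this is where the positivity of $|A|$ is essential, since $|A|$ lies in the norm-closed algebra generated by $|A|^2$ (equivalently, $|A|$ arises from $|A|^2$ by continuous functional calculus, being a uniform limit of polynomials in $|A|^2$ on its spectrum), so that commuting with $|A|^2$ forces commuting with $|A|$. A minor structural point, already tacitly used in the quasinormal proposition above, is the assumption that $U$ may be taken unitary in the polar decomposition of $A$.
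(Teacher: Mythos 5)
Your proposal is correct and follows essentially the same route as the paper: polar decomposition $A=U|A|$ with $U$ unitary, the commutation relations $|A|U=U|A|$ and $|A|B=B|A|$ yielding $BA=U^*(AB)U$, and then the unitary invariance of paranormality. You are in fact slightly more careful than the paper, which passes from $|A|^2B=B|A|^2$ to $|A|B=B|A|$ without comment, whereas you correctly justify it by the continuous functional calculus for the positive square root.
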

\begin{proof}
Let $A=U|A|$ with $U$ is unitary. Since $A$ is normal we have
$|A|U=U|A|$ and  hence $B|A|=|A|B$.This then gives  $$BA=U^*ABU.$$
From this fact we obtain that for all  unit vector $x\in
\mathcal{H},$
\begin{eqnarray*}
\|(BA)x\|^2&=&
\|U^*(AB)Ux\|^2\\&\leq&\|(AB)Ux\|^2\\&\leq&\|(AB)^2Ux\|\;\;(\hbox{since}\;\;AB\;\;\hbox{is
paranormal and }\;\;
\|Ux\|=1)\\&=&\|U\big(U^*ABU\big)^2x\|\\&\leq&\|(BA)^2x\|.
\end{eqnarray*}
Hence $BA$ is paranormal operator.
\end{proof}

Let $A$ and $B\in \mathcal{B}(\mathcal{H})$. The commutator of $A$
and $B$ is defined as $[A,B]=AB-BA$ and the self-commutator of $A$
is $[A^*,A]$.\par\vskip 0.2 cm \noindent The span of $A$ and $B$ is
$$span\{A,\;B\;\}:=\{\;aA+bB,\;\;\;a,b\in \mathbb{C}\;\;\;\}.$$

\begin{proposition}
Let $A$ and $B\in \mathcal{B}(\mathcal{H})$ such that $A$ is
quasihyponormal and $B$ is hyponormal. If $$[B^*,\;A]=A^*[B^*,B]B
=B^*[A^*,A]B=0.$$ Then $T=\omega A+B$ is quasihyponormal for all
$\omega \in \mathbb{C}.$
\end{proposition}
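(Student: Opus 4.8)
The plan is to use the elementary identity, valid for every $T\in\mathcal B(\mathcal H)$,
$$T^{*2}T^{2}-(T^{*}T)^{2}=T^{*}\big(T^{*}T-TT^{*}\big)T=T^{*}[T^{*},T]T ,$$
so that quasihyponormality of $T$ is equivalent to $T^{*}[T^{*},T]T\ge 0$. It therefore suffices to compute the self-commutator of $T=\omega A+B$, conjugate it by $T$, and verify positivity.

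First I would expand $[T^{*},T]=T^{*}T-TT^{*}$ for $T=\omega A+B$. This produces the diagonal contributions $|\omega|^{2}[A^{*},A]$ and $[B^{*},B]$, together with the mixed terms $\bar\omega(A^{*}B-BA^{*})$ and $\omega(B^{*}A-AB^{*})$. The hypothesis $[B^{*},A]=0$ gives $B^{*}A=AB^{*}$, and passing to adjoints $A^{*}B=BA^{*}$; hence both mixed terms disappear, and writing $C:=[A^{*},A]$ and $D:=[B^{*},B]$ we obtain
$$[T^{*},T]=|\omega|^{2}C+D .$$

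Conjugating by $T$ then yields $T^{*}[T^{*},T]T=|\omega|^{2}\,T^{*}CT+T^{*}DT$, and I would treat the two pieces separately. For the $D$-piece, $B$ hyponormal means $D=B^{*}B-BB^{*}\ge 0$, so $T^{*}DT\ge 0$ at once by congruence; alternatively, the hypothesis $A^{*}[B^{*},B]B=0$ and its adjoint $B^{*}DA=0$ remove the cross terms and leave $T^{*}DT=|\omega|^{2}A^{*}DA+B^{*}DB$, both summands being nonnegative. For the $C$-piece,
$$T^{*}CT=|\omega|^{2}A^{*}CA+\bar\omega A^{*}CB+\omega B^{*}CA+B^{*}CB ,$$
the hypothesis $B^{*}[A^{*},A]B=0$ kills the last term, while $A^{*}CA=A^{*2}A^{2}-(A^{*}A)^{2}\ge 0$ is precisely the quasihyponormality of $A$.

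The main obstacle is the surviving pair of cross terms $\bar\omega A^{*}CB+\omega B^{*}CA=2\Re(\bar\omega A^{*}CB)$. Here congruence is of no use, since $A$ is only quasihyponormal and $C=A^{*}A-AA^{*}$ need not be positive; consequently these terms are not automatically dominated by the positive ones, and for the asserted inequality to hold for every $\omega\in\mathbb C$ one is forced to show that they vanish, i.e. $A^{*}CB=A^{*}(A^{*}A-AA^{*})B=0$ (its adjoint $B^{*}CA$ then vanishing as well). I would try to establish this from the commutation identities generated by $[B^{*},A]=0$ — namely $A^{*}B=BA^{*}$, $ABA^{*}=AA^{*}B$, and $A^{*n}B=BA^{*n}$ — in combination with the relation $B^{*}CB=0$; this is where the genuine work of the proof lies. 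Once the cross terms are shown to vanish, one is left with
$$T^{*}[T^{*},T]T=|\omega|^{4}A^{*}CA+|\omega|^{2}A^{*}DA+B^{*}DB\ge 0 ,$$
each summand being nonnegative, and hence $T=\omega A+B$ is quasihyponormal for every $\omega\in\mathbb C$.
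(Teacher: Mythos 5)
Your expansion is the same one the paper uses, and your bookkeeping is more careful than the paper's: writing $C=[A^*,A]$, $D=[B^*,B]$, the hypotheses kill $B^*CB$, $A^*DB$ and $B^*DA$, and positivity of $A^*CA$, $A^*DA$, $B^*DB$ follows from quasihyponormality of $A$ and hyponormality of $B$. The obstacle you single out is real: the terms $|\omega|^2\bigl(\bar\omega A^*CB+\omega B^*CA\bigr)$ survive, and the paper's own proof simply omits them without comment, passing directly to the three-term expression $|\omega|^4A^*CA+|\omega|^2A^*DA+B^*DB$. So you have not missed a trick that the paper supplies; you have located a gap that the paper's argument shares.

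I do not think the route you sketch for closing it can succeed with the stated hypotheses. From $[B^*,A]=0$ you get $B^*A=AB^*$ and, by adjoints, $A^*B=BA^*$; these let you move $B$ past powers of $A^*$ but never past $A$ itself, so $A^*CB=A^{*2}AB-A^*AA^*B$ cannot be collapsed. The other natural source, $B^*CB=0$, yields $CB=0$ (hence $A^*CB=0$) only when $C\ge0$, i.e.\ only when $A$ is \emph{hyponormal} --- precisely the strengthening the proposition avoids by assuming $A$ merely quasihyponormal, in which case $C$ is in general indefinite and $B^*CB=0$ says nothing about $CB$. So either the proposition needs an additional hypothesis (e.g.\ $A^*[A^*,A]B=0$, or $A$ hyponormal, or $[B,A]=0$ as well), or a genuinely different argument; as it stands, neither your proposal nor the paper's proof establishes the claim. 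Your treatment of the $D$-piece, by contrast, is complete --- and your remark that $T^*DT\ge0$ already follows by congruence from $D\ge0$, without invoking $A^*[B^*,B]B=0$ at all, is a small improvement on the paper.
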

\begin{proof}
Note that if $\omega \in \mathbb{C}$ and $T=\omega A+B$ , then
$$[T^*,\;T]=|\omega|^2 [A^*,A]+[B^*,B]+2Re\big(\omega[B^*,A]
\big).$$ By hypotheses given in the theorem, we have
$$T^*[T^*,T]T=|\omega|^4\underbrace{A^*[A^*,A]A}_{\geq 0}+|\omega|^2\underbrace{A^*[B^*,B]A}_{\geq 0}+\underbrace{B^*[B^*,B]B}_{\geq 0}.$$
By Lemma 2.1 we get $T^*[T^*,T]T\geq 0.$ This completes the proof.
\end{proof}
\begin{proposition}
Let $A$ and $B\in \mathcal{B}(\mathcal{H})$ such that $A$  and $B$
are  quasihyponormal. If
$$[B^*,\;A]=A^*[B^*,B]B =B^*[A^*,A]B=A^*[B^*,B]A=0.$$ Then $T=\omega A+B$ is
quasihyponormal for all $\omega \in \mathbb{C}.$
\end{proposition}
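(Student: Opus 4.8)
The plan is to imitate the proof of the preceding proposition, viewing the self-commutator $[T^*,T]$ as a quadratic expression in $\omega$ and then showing that the congruence $T^*[T^*,T]T$ breaks up into manifestly nonnegative pieces. First I would write $T^*=\bar\omega A^*+B^*$ and expand the self-commutator exactly as before, obtaining the identity $[T^*,T]=|\omega|^2[A^*,A]+[B^*,B]+2\,\mathrm{Re}\big(\omega[B^*,A]\big)$. The hypothesis $[B^*,A]=0$ kills the mixed-commutator term, so that $$[T^*,T]=|\omega|^2[A^*,A]+[B^*,B].$$

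Next I would form $T^*[T^*,T]T$ and expand it into the four blocks obtained by letting $T^*$ act on the left through either $\bar\omega A^*$ or $B^*$ and $T$ act on the right through either $\omega A$ or $B$; each block then splits further according to the two summands $[A^*,A]$ and $[B^*,B]$ of the middle factor. The three annihilation hypotheses are tailored precisely to delete the ``impure'' blocks: $A^*[B^*,B]A=0$ removes the $[B^*,B]$ contribution sandwiched by two copies of $A$, $B^*[A^*,A]B=0$ removes the $[A^*,A]$ contribution sandwiched by two copies of $B$, and $A^*[B^*,B]B=0$ together with its adjoint $B^*[B^*,B]A=0$ removes the genuinely mixed $[B^*,B]$ contributions. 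After these cancellations the two pure diagonal terms $|\omega|^4A^*[A^*,A]A$ and $B^*[B^*,B]B$ survive, and each is nonnegative because $A$ and $B$ are quasihyponormal; Lemma 2.1 then assembles the sum into a nonnegative operator, giving $T^*[T^*,T]T\geq 0$, that is, $T$ is quasihyponormal.

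The hard part will be the remaining genuinely mixed terms coming from the $[A^*,A]$ summand, namely $A^*[A^*,A]B$ and its adjoint $B^*[A^*,A]A$, whose combination $2|\omega|^2\,\mathrm{Re}\big(\bar\omega A^*[A^*,A]B\big)$ is not annihilated by any of the four stated identities taken at face value. Here the commutation hypothesis $[B^*,A]=0$ must be exploited in full strength: it is equivalent to the pair $B^*A=AB^*$ and, on taking adjoints, $A^*B=BA^*$, so that $A$ commutes with $B^*$ and $A^*$ commutes with $B$. The crux is to push these commutations through $A^*[A^*,A]B=A^{*2}AB-A^*AA^*B$ in order to show that this cross term vanishes; this is the single step that goes beyond the bookkeeping of the previous proposition, and it is precisely where the commutation relation, rather than mere positivity of $[B^*,B]$ as in the hyponormal case, is indispensable. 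Once that cross term is disposed of, the assembly of the nonnegative diagonal pieces is routine and the conclusion follows as before.
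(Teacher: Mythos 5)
Your expansion of $[T^*,T]$ and of the eight blocks of $T^*[T^*,T]T$ is correct, and you have put your finger on exactly the right issue: after the hypotheses remove $B^*[A^*,A]B$, $A^*[B^*,B]A$, $A^*[B^*,B]B$ and its adjoint, the terms $|\omega|^2\bar\omega\,A^*[A^*,A]B$ and $|\omega|^2\omega\,B^*[A^*,A]A$ remain. But the proposal stops precisely at the point where it matters: you announce that ``the crux is to push these commutations through $A^*[A^*,A]B$'' so as to make the cross term vanish, without actually doing so, and the step does not go through as described. The relation $[B^*,A]=0$ gives $B^*A=AB^*$ and, by adjoints, $A^*B=BA^*$; applying the latter once yields $A^*[A^*,A]B=A^{*2}AB-A^*ABA^*$, and there you are stuck, because nothing in the hypotheses lets you commute $B$ past $A$ (you control $[B^*,A]$, not $[B,A]$). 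So the term is not annihilated at face value, nor after the substitution, and no further use of the four stated identities disposes of it. Nor can it be absorbed: the surviving contribution is $|\omega|^2\bigl(\bar\omega\,A^*[A^*,A]B+\omega\,B^*[A^*,A]A\bigr)$, which changes sign with the argument of $\omega$, so it cannot be dominated by the two nonnegative diagonal blocks uniformly in $\omega$; it genuinely has to vanish, and you have not shown that it does.

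For comparison, the paper's own proof is the one-line assertion that ``by the same arguments as in the proof of the proposition above'' one has $T^*[T^*,T]T=|\omega|^4A^*[A^*,A]A+B^*[B^*,B]B$; it silently discards these same cross terms without comment (as does the preceding proposition). Your bookkeeping is therefore more careful than the paper's, and you have correctly isolated the weak point of the argument --- but flagging the gap is not the same as closing it. To complete a proof along these lines you would need either to add $A^*[A^*,A]B=0$ (equivalently its adjoint $B^*[A^*,A]A=0$) to the hypotheses, or to supply a genuine derivation of it from the stated ones; as written, the proposal is incomplete.
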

\begin{proof}
By the same arguments as in the proof of the proposition above, we
have $$T^*[T^*,T]T=|\omega|^4\underbrace{A^*[A^*,A]A}_{\geq
0}+\underbrace{B^*[B^*,B]B}_{\geq 0}.$$
\end{proof}
The next result is a necessary and sufficient condition for
$span\{A,B\}$ to be quasihyponormal.
\begin{theorem}
Let $A$ and $B\in \mathcal{B}(\mathcal{H})$ such that
$$[B^*,\;A]=A^*[B^*,B]B =B^*[A^*,A]B=A^*[B^*,B]A=0.$$ Then $A$ and
$B$ are quasihyponormal if and only if $span\{A,B\}$ is
quasihyponormal.
\end{theorem}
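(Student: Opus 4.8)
The plan is to treat the equivalence as two very asymmetric implications: one essentially free, the other reducible to the preceding Proposition. Throughout I use that an operator $S\in\mathcal{B}(\mathcal{H})$ is quasihyponormal exactly when $S^{*}[S^{*},S]S\geq 0$, i.e. $S^{*2}S^{2}\geq(S^{*}S)^{2}$, since $S^{*}[S^{*},S]S=S^{*2}S^{2}-(S^{*}S)^{2}$.

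For the reverse implication I would argue that it requires no computation at all. If $\mathrm{span}\{A,B\}$ is quasihyponormal, then every element of this set is quasihyponormal; in particular the two distinguished elements $A=1\cdot A+0\cdot B$ and $B=0\cdot A+1\cdot B$ are quasihyponormal, which is the assertion. The commutator hypotheses play no role in this direction.

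For the forward implication, assume $A$ and $B$ are quasihyponormal and fix $a,b\in\mathbb{C}$; I must show $T=aA+bB$ is quasihyponormal. The first step is a scale-invariance observation: for any $S$ and any scalar $\lambda$ one has $(\lambda S)^{*2}(\lambda S)^{2}-\big((\lambda S)^{*}(\lambda S)\big)^{2}=|\lambda|^{4}\big(S^{*2}S^{2}-(S^{*}S)^{2}\big)$, so $\lambda S$ is quasihyponormal whenever $S$ is. Granting this, I split into cases. If $b=0$ then $T=aA$ is a scalar multiple of the quasihyponormal operator $A$ and we are done; the case $a=b=0$ gives the zero operator, which is trivially quasihyponormal. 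If $b\neq 0$ I factor $T=b\big(\tfrac{a}{b}A+B\big)$ and invoke the preceding Proposition with $\omega=a/b$: its four hypotheses $[B^{*},A]=A^{*}[B^{*},B]B=B^{*}[A^{*},A]B=A^{*}[B^{*},B]A=0$ are exactly those assumed in the theorem, so $\tfrac{a}{b}A+B$ is quasihyponormal, and a final application of scale-invariance to the factor $b$ shows $T$ is quasihyponormal. As $a,b$ were arbitrary, $\mathrm{span}\{A,B\}$ is quasihyponormal.

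The genuinely analytic content is not produced here but borrowed: the preceding Proposition already establishes that, once $[B^{*},A]=0$ forces $[T^{*},T]=|\omega|^{2}[A^{*},A]+[B^{*},B]$, the remaining three conditions collapse $T^{*}[T^{*},T]T$ to the manifestly nonnegative sum $|\omega|^{4}A^{*}[A^{*},A]A+B^{*}[B^{*},B]B$. Consequently the only points needing care in the present theorem are organizational rather than computational: one must handle the degenerate direction $b=0$ separately, since the normalization $\omega=a/b$ is unavailable there, and one must verify that the hypotheses of the cited Proposition match those of the theorem verbatim. I expect the main obstacle to surface only if one instead attempts a direct, self-contained expansion of $T^{*}[T^{*},T]T$ for $T=aA+bB$: there the Hermitian cross terms of the form $\bar{a}b\,A^{*}[A^{*},A]B+a\bar{b}\,B^{*}[A^{*},A]A$ would have to be shown not to spoil positivity, and it is precisely to avoid dealing with these that the factorization-plus-citation route is preferable.
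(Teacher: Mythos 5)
Your proof is correct and follows essentially the same route as the paper: the reverse direction is read off from the definition of $\mathrm{span}\{A,B\}$, and the forward direction is delegated to the preceding proposition on $\omega A+B$. You are in fact slightly more careful than the paper, which declares the forward direction ``immediate'' from that proposition even though it only treats operators of the form $\omega A+B$; your scale-invariance identity $(\lambda S)^{*2}(\lambda S)^{2}-\big((\lambda S)^{*}(\lambda S)\big)^{2}=|\lambda|^{4}\big(S^{*2}S^{2}-(S^{*}S)^{2}\big)$ together with the separate case $b=0$ supplies the small reduction from $aA+bB$ that the paper leaves implicit.
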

\begin{proof}
First assume that $A$ and $B$ are quasihyponormal.It is immediate
from the preceding proposition that $span\{A,B\;\}$ is
quasihyponormal. The converse is immediate from the definition of
$span\{A,B\}.$
\end{proof}
\begin{theorem}
Let $A$ and $B \in \mathcal{B}(\mathcal{H})$ such that $A$ is normal
and $AB$ is $(p,k)$-quasihyponormal. If $B(AA^*)=(AA^*B)$ then $BA$
is $(p,k)$-quasihyponormal  for $0<p\leq 1$ and $k\in
\mathbb{{Z}}_+$.
\end{theorem}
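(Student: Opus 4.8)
The plan is to reduce the claim to the unitary invariance of $(p,k)$-quasihyponormality, following the pattern used above for the quasinormal and paranormal cases. I would write the polar decomposition $A=U|A|$ with $U$ unitary; since $A$ is normal we have $|A|U=U|A|$. Next I would convert the commutation hypothesis into a statement about $|A|$: because $A$ is normal, $AA^*=A^*A=|A|^2$, so $B(AA^*)=(AA^*)B$ reads $B|A|^2=|A|^2B$. Passing through the continuous functional calculus (equivalently, uniqueness of the positive square root), this upgrades to $B|A|=|A|B$.

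With the two relations $|A|U=U|A|$ and $B|A|=|A|B$ in hand, the key step is the algebraic identity
$$U^*(AB)U=U^*U|A|BU=|A|BU=B|A|U=BU|A|=BA,$$
so that $BA$ and $AB$ are unitarily equivalent via $U$.

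It then remains to observe that $(p,k)$-quasihyponormality is preserved under unitary conjugation. Setting $S=AB$ and $T=BA=U^*SU$, one has $T^*T=U^*(S^*S)U$ and $TT^*=U^*(SS^*)U$; since $U$ is unitary the functional calculus gives $(T^*T)^p=U^*(S^*S)^pU$ and $(TT^*)^p=U^*(SS^*)^pU$ for $0<p\le 1$, while plainly $T^k=U^*S^kU$ and $T^{*k}=U^*S^{*k}U$. Substituting these identities leads to
$$T^{*k}\big((T^*T)^p-(TT^*)^p\big)T^k=U^*\Big(S^{*k}\big((S^*S)^p-(SS^*)^p\big)S^k\Big)U.$$
The operator inside the parentheses on the right is $\geq 0$ because $AB=S$ is $(p,k)$-quasihyponormal, and conjugation by the unitary $U$ preserves positivity; hence the left-hand side is $\geq 0$, which is exactly the $(p,k)$-quasihyponormality of $BA$.

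I expect the only delicate points to be the passage from $B|A|^2=|A|^2B$ to $B|A|=|A|B$ and the spectral identity $(U^*MU)^p=U^*M^pU$ for $M\geq 0$. Both are routine consequences of the continuous functional calculus, but they are precisely where the argument uses that $U$ is unitary and that the $p$-th powers are defined spectrally rather than algebraically; everything else is straightforward substitution.
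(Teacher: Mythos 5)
Your proof is correct, and its skeleton is the same as the paper's: choose a unitary $U$ with $A=U|A|=|A|U$, turn the hypothesis $B(AA^*)=(AA^*)B$ into $B|A|=|A|B$ via the positive square root, deduce the unitary equivalence $BA=U^*(AB)U$, and transport the defining positivity of $(p,k)$-quasihyponormality across this equivalence. The one genuine difference is in how the $p$-th powers are moved through the conjugation. The paper invokes Hansen's inequality (its Lemma 2.2, $\bigl(B^*AB\bigr)^\alpha\geq B^*A^\alpha B$ for $\|B\|\leq 1$) twice, once in each direction of the sandwich, to obtain the needed chain of inequalities; you instead use the exact identity $(U^*MU)^p=U^*M^pU$ for $M\geq 0$, which holds because conjugation by a unitary is a $*$-automorphism commuting with the continuous functional calculus. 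Your route is the cleaner one: since $U$ is unitary, the Hansen steps in the paper are in fact equalities, so nothing is lost and the argument becomes a single substitution plus the observation that unitary conjugation preserves positivity. The price of the exact-identity route is that it uses the unitarity of $U$ essentially, whereas Hansen's inequality would still give usable one-sided estimates if $U$ were merely a contraction; in the present setting, where normality of $A$ supplies a genuine unitary, that extra generality is not needed. Both arguments share the same implicit point worth flagging: when $A$ is not injective the partial isometry in the polar decomposition must be extended to a unitary commuting with $|A|$, which is possible precisely because $A$ is normal.
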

\begin{proof}
Since $A$ is normal, we have $A=PU=UP$ with $P\geq 0$ and $U$
unitary.\par \vskip 0.2 cm \noindent$$B(AA^*)=(AA^*B)
P^2B=BP^2\Longrightarrow PB=BP.$$ A simple computation shows that
$$U^*ABU=BA.$$
since $(AB)^*(AB)$ and   $(AB)(AB)^*$ are positive  and $\|U\|\leq
1$, by using Lemma 2.2, it follows that
\begin{eqnarray*}
(BA)^k\bigg((BA)^*(BA)\bigg)^p(BA)^k&=&\big(U^*(AB)^*U\big)^k\bigg(
U^*(AB)^*(AB)U)\bigg)^p(U^*(AB)U)^k
\\&=& U^*(AB)^{*k}U\bigg(
U^*(AB)^*(AB)U\bigg)^pU^*(AB)^kU\\&\geq&
U^*(AB)^{*k}UU^*\bigg((AB)^*(AB)\bigg)^pUU^*(AB)^kU\\&\geq&U^*\underbrace{(AB)^{*k}\bigg((AB)^*(AB)\bigg)^p(AB)^k}U
\\&\geq&
U^*(AB)^{*k}\bigg((AB)(AB)^*\bigg)^p(AB)^kU\;\;(\hbox{since}\;
AB\;\hbox{is in}\;\; \mathcal{Q}(p,k))\\&\geq& U^*(AB)^{*k}\bigg(
U\underbrace{U^*(AB)UU^*(AB)^*U}_{\geq
0}U^*\bigg)^p(AB)^kU\\&\geq&U^*(AB)^{*k}U\bigg(  U^*(AB)UU^*(AB)^*U
\bigg)^pU^*(AB)^kU \;\;(\hbox{by Lemma 2.2})\\&\geq&
\big(U^*(AB)^*U\big)^k\bigg(  U^*(AB)UU^*(AB)^*U
\bigg)^p\big(U^*(AB)U\big)^k\\&=&(BA)^{*k}\bigg((BA)(BA)^*\bigg)^p(BA)^k.
\end{eqnarray*}
This implies that $BA$ is $(p,k)$-quasihyponormal operator.The proof
of this theorem is finished
\end{proof}

\begin{theorem}
Let $A$ and $B\in \mathcal{B}(\mathcal{H})$ such that $A$ is
$(p,k)$-quasihyponormal and $B$ is invertible. If $A$ commute with
$B$ and $B^*$ then $AB$ is $(p,k) $-quasihyponormal for $0<p\leq 1$
and $k\geq 1.$\end{theorem}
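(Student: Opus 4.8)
The plan is to exploit that the commutation hypotheses make the von Neumann algebra generated by $A,A^*$ commute entirely with that generated by $B,B^*$, so that the problem factors. First I would record the consequences of ``$A$ commutes with $B$ and $B^*$''. Taking adjoints of $AB=BA$ and $AB^*=B^*A$ gives $B^*A^*=A^*B^*$ and $BA^*=A^*B$, so $A^*$ also commutes with both $B$ and $B^*$. Consequently every operator obtained from $A,A^*$ by continuous functional calculus, in particular $A^k$, $A^{*k}$, $(A^*A)^p$ and $(AA^*)^p$, commutes with every operator built from $B,B^*$. Since $A$ and $B$ commute we also have $(AB)^k=A^kB^k$ and $(AB)^{*k}=A^{*k}B^{*k}$.

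Next I would compute the two positive operators attached to $AB$. Using the commutations, $(AB)^*(AB)=B^*A^*AB=(A^*A)(B^*B)$ and $(AB)(AB)^*=ABB^*A^*=(AA^*)(BB^*)$, each a product of two \emph{commuting} positive operators. For commuting positive operators $X,Y$ the continuous functional calculus yields $(XY)^p=X^pY^p$, hence
\[
\big((AB)^*(AB)\big)^p=(A^*A)^p(B^*B)^p,\qquad \big((AB)(AB)^*\big)^p=(AA^*)^p(BB^*)^p.
\]
Substituting these together with $(AB)^k=A^kB^k$ and $(AB)^{*k}=A^{*k}B^{*k}$ into the defining quantity and moving all $A$-factors past all $B$-factors (legitimate by the cross-commutation just established), one obtains the factorisation
\[
E:=(AB)^{*k}\Big(\big((AB)^*(AB)\big)^p-\big((AB)(AB)^*\big)^p\Big)(AB)^k=P_1Q_1-P_2Q_2,
\]
where $P_1=A^{*k}(A^*A)^pA^k$, $P_2=A^{*k}(AA^*)^pA^k$, $Q_1=B^{*k}(B^*B)^pB^k$, $Q_2=B^{*k}(BB^*)^pB^k$; all four are positive and each $P_i$ commutes with each $Q_j$.

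To finish I would split $E=(P_1-P_2)Q_1+P_2(Q_1-Q_2)$. The $(p,k)$-quasihyponormality of $A$ is exactly $P_1-P_2\ge 0$, and since this commutes with the positive operator $Q_1$, their product is positive, so the first summand is $\ge 0$. The hard part will be the second summand $P_2(Q_1-Q_2)$: it is positive precisely when $Q_1\ge Q_2$, i.e. $B^{*k}\big((B^*B)^p-(BB^*)^p\big)B^k\ge 0$, which is the $(p,k)$-quasihyponormality of $B$. This is the crux, and it is where the invertibility of $B$ is meant to intervene: as $B$ is invertible, $B^k$ has dense range, so Lemma 2.1(2) lets one pass freely between an inequality $X\ge Y$ and its compression $B^{*k}XB^k\ge B^{*k}YB^k$, and Lemma 2.2 (Hansen) is available to compare $(B^*B)^p$ with $(BB^*)^p$ after a suitable contraction. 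I would expect the decisive step to be establishing $Q_1\ge Q_2$ from the data on $B$; once both summands are shown positive, $E\ge 0$ and hence $AB\in\mathcal{Q}(p,k)$, in exact parallel with the doubly commutative case of Proposition 2.4.
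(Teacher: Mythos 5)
Your reduction of the problem to the factorised form $E=P_1Q_1-P_2Q_2$, with the four positive, pairwise cross-commuting operators $P_i,Q_j$, is correct, and the splitting $E=(P_1-P_2)Q_1+P_2(Q_1-Q_2)$ together with the fact that a product of two commuting positive operators is positive does dispose of the first summand. But the proposal is not a proof: the step you yourself flag as ``the crux'' cannot be carried out. The second summand is nonnegative precisely when $Q_1\ge Q_2$, and since $B^k$ is invertible, Lemma 2.1(2) shows that $Q_1\ge Q_2$ is \emph{equivalent} to $(B^*B)^p\ge (BB^*)^p$, i.e.\ to the $p$-hyponormality of $B$. This is not among the hypotheses and does not follow from invertibility plus commutation with $A$: take $A=I$ (normal, hence $(p,k)$-quasihyponormal, and commuting with everything) and $B$ an invertible bilateral weighted shift with strictly decreasing weights bounded below by $1$; then $B^*B-BB^*\le 0$ with strict inequality on each basis vector, $Q_1\ge Q_2$ fails, and indeed $AB=B$ is not $(p,k)$-quasihyponormal. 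So no appeal to Hansen's inequality or to dense ranges will produce $Q_1\ge Q_2$; the gap is irreparable from the stated data, and your decomposition in fact exposes that the statement as printed is false.

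For what it is worth, the paper's own argument takes a different route: it does not factorise, but runs a chain of operator inequalities, first applying Hansen's inequality to obtain $\left(B^*A^*AB\right)^p\ge B^*(A^*A)^pB$, then inserting the $(p,k)$-quasihyponormality of $A$ in the middle via Lemma 2.1(1) after commuting all $B$-factors to the outside, and finally undoing the Hansen step to reach $(AB)^{*k}\left((AB)(AB)^*\right)^p(AB)^k$. That chain also uses more about $B$ than invertibility (Hansen requires $\|B\|\le 1$, and the closing inequality amounts to $(AA^*)^p(B^*B-BB^*)\ge 0$, essentially hyponormality of $B$), so it suffers from the same defect your analysis isolates. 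A correct version of the result needs an additional hypothesis such as $B$ unitary or $B$ $p$-hyponormal (and commuting as stated); under such a hypothesis your factorisation closes the argument immediately and is cleaner than the paper's inequality chain.
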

\begin{proof}
\begin{eqnarray*}
(AB)^{*k}\bigg( (AB)^*(AB)\bigg)^p(AB)^k&=&(AB)^{*k}\bigg(
B^*A^*AB\bigg)^p(AB)^k\\&\geq&(AB)^{*k}B^*(A^*A)^pB(AB)^k\\&\geq&
(B^*)^{k+1}\underbrace{A^{*k}(A^*A)^pA^k}B^{k+1}\\&\geq&(B^*)^{k+1}A^{*k}(AA^*)^pA^kB^{k+1}\\&\geq&(AB)^{*k}\bigg((AB)(AB)^*
\bigg)^p(AB)^k
\end{eqnarray*}
\end{proof}

\section{ KAPLANSKY LIKE THEOREM FOR UNBOUNDED $k$-QUASI-HYPONORMAL OPERATORS}
 In this
section, we generalized some notions of bounded operators to
unbounded operators on a Hilbert space and we give sufficient
conditions  which ensure $k$-quasihyponormality ($k$-parnormality or
$k$-*-paranormality) of the product of $k$-quasihyponormal (
$k$-paranotmal or $k-*$-paranormal) of  unbounded operators.
\par \vskip 0.2 cm \noindent For an $A\in Op(\mathcal{H})$, define
$A^2$ by
$$D(A^2)=\{\;x\in \mathcal{D}(A)/\;Ax\in
\mathcal{D}(A)\;\;\},A^2x=A(Ax).$$ We can define higher powers
recursively. Given $A^n$, define
 $$D(A^{n+1})=\{\;x\in
\mathcal{D}(A)/\;Ax\in \mathcal{D}(A^n)\;\;\},A^{n+1}x=A^n(Ax).$$Let
us begin with the concept of $k$-quasihyponormality.
\begin{definition}
A densely defined operator $A:\mathcal{D}(A) \subset
\mathcal{H}\longrightarrow \mathcal{H}$ is said to be
$k$-quasihyponormal  for some positive integer $k$ if
$\mathcal{D}(A)\subset \mathcal{D}(A^*)$ and $$ \|A^*A^kx\|\leq
\|A^{k+1}x\|\;\;\hbox{for all}\;\;x\in \mathcal{D}(A^{k+1}).$$

\end{definition}

\begin{remark}
 \par\vskip 0.2cm \noindent (1)\;
Clearly, the class of all $k$-quasihyponormal operators on
$\mathcal{H}$ contains all hyponormal operators.
\par \vskip 0.2 cm \noindent (2)\; the class of $k$-quasihyponormal
operators properly contains the classes of
$k^\prime$-quasihyponormal ($k^{\prime} < k$).
\end{remark}

\begin{definition}
A densely defined operator $A:\mathcal{D}(A)\subset
\mathcal{H}\longrightarrow \mathcal{H}$ is said to be
\par \vskip 0.2 cm \noindent (1)\;$k$-paranormal  for some positive integer $k$ if
$$\|Ax\|^k\leq \|A^kx\|\|x\|^{k-1}\;\;\hbox{for all}\;\;x \in
\mathcal{D}(A^k),$$ or equivalently
$$\|Ax\|^k\leq \|A^kx\|\;\;\hbox{for every unit vector }\;\;x \in
\mathcal{D}(A^k).$$
\par \vskip 0.2 cm \noindent (2)\;
$k-*$-paranormal for some positive integer $k$  if
$\mathcal{D}(A)\subset \mathcal{D}(A^*)$ and
$$\|A^*x\|^k\leq \|A^kx\|\|x\|^{k-1}\;\;\hbox{for all}\;\;x \in
\mathcal{D}(A^k),$$ or equivalently $\mathcal{D}(A)\subset
\mathcal{D}(A^*)$ and
$$\|A^*x\|^k\leq \|A^kx\|\;\;\hbox{for every unit vector }\;\;x \in
\mathcal{D}(A^k).$$
\end{definition}
\begin{example}
Consider the Hilbert space $\mathcal{H}=l^2(\mathbb{Z})$ under the
inner product $\langle
x,\;y\rangle=\displaystyle\sum_{n=-\infty}^\infty
x_n\overline{y_n}$, and let $(e_n)_{n\in \mathbb{Z}}$ be any
orthonormal basis for $\mathcal{H}.$ Let $(\omega_n)_{n \in
\mathbb{Z}}$ be a increasing sequence of numbers such that
$\omega_n>0$ for all $n\in \mathbb{Z}$ and
$\displaystyle\sup_n(\omega_n)=\infty.$ Consider  the unilateral
forward  weighted  shift operator $A$  defined in term of the
standard basis of $l^2(\mathbb{Z})$ by $$Ae_n=\omega_n e_{n+1}
\;\;\hbox{for all}\;\;n\in \mathbb{Z}.$$\par \vskip 0.2 cm \noindent
A simple calculation shows that the adjoint of unilateral forward
weighted shift is given by
$$A^*e_n=\omega_ne_{n-1}\;\;\hbox{for all}\;\;n\in \mathbb{Z}.$$
By this we have
$$A^*Ae_n=\omega_n^2e_n\;\;\hbox{and}\;\;A^2e_n=\omega_n\omega_{n+1}e_{n+2}.$$
Consequently $$\|A^*Ae_n\|\leq \|A^2e_n\|\;\;\;\hbox{for
all}\;\;n\in \mathbb{Z}.$$ \noindent Which implies that the operator
$A$ is quasihyponormal.
\end{example}
\begin{lemma}
If $A \in Op(\mathcal{H})$ be a $k$-quasihyponormal, then
$$\|A^kx\|^2\leq \|A^{k+1}x\|\|A^{k-1}x\|\;\;\hbox{for}\;\;x\in \mathcal{D}(A^{k+1}).$$
\end{lemma}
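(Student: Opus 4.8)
The plan is to reduce the quadratic quantity $\|A^k x\|^2$ to a single inner product that exposes $A^{k-1}x$ and $A^{k+1}x$ simultaneously, and then close the estimate with the Cauchy--Schwarz inequality combined with the defining inequality of $k$-quasihyponormality. Before any computation I would fix $x \in \mathcal{D}(A^{k+1})$ and do the domain bookkeeping that makes the adjoint manipulation legitimate: since $x \in \mathcal{D}(A^{k+1})$ we have $A^{k}x \in \mathcal{D}(A)$, and because $A$ is $k$-quasihyponormal we know $\mathcal{D}(A)\subset \mathcal{D}(A^*)$, whence $A^{k}x \in \mathcal{D}(A^*)$; similarly $A^{k-1}x \in \mathcal{D}(A)$.

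With these memberships secured, I would write $A^k x = A\bigl(A^{k-1}x\bigr)$ and use the defining property of the adjoint to obtain
$$\|A^k x\|^2 = \langle A(A^{k-1}x),\, A^k x\rangle = \langle A^{k-1}x,\, A^*A^k x\rangle.$$
Since the left-hand side is real and nonnegative, the Cauchy--Schwarz inequality applied to the right-hand side yields $\|A^k x\|^2 \leq \|A^{k-1}x\|\,\|A^*A^k x\|$.

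The final step is to invoke the hypothesis of $k$-quasihyponormality in the form $\|A^*A^k x\| \leq \|A^{k+1}x\|$, which holds precisely because $x \in \mathcal{D}(A^{k+1})$, and substitute it for the last factor to conclude $\|A^k x\|^2 \leq \|A^{k-1}x\|\,\|A^{k+1}x\|$, as claimed.

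The only delicate point, and the step I expect to demand the most care, is the domain bookkeeping intrinsic to the unbounded setting: one must verify that every vector to which $A$ or $A^*$ is applied actually lies in the appropriate domain, so that the adjoint identity $\langle Au,\,v\rangle = \langle u,\,A^*v\rangle$ is valid. No positivity argument or Hansen-type inequality (Lemma 2.2) is needed; once the domains are in order, the whole proof is just Cauchy--Schwarz together with the definition of $k$-quasihyponormality.
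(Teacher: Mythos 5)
Your proposal is correct and follows essentially the same route as the paper: write $\|A^kx\|^2=\langle A^{k-1}x, A^*A^kx\rangle$ via the adjoint, apply Cauchy--Schwarz, and then use $\|A^*A^kx\|\le\|A^{k+1}x\|$ from the definition of $k$-quasihyponormality. The domain bookkeeping you add is a welcome refinement that the paper's proof leaves implicit.
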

\begin{proof}
In fact \begin{eqnarray*}\|A^kx\|^2=\langle
A^kx\;,A^kx\rangle&=&\langle A^*A^kx\;,\;A^{k-1}x\rangle\\&\leq&
\|A^*A^{k}x\|\|A^{k-1}x\|\\&\leq& \|A^{k+1}x\|\|A^{k-1}x\|\;
.\end{eqnarray*}
\end{proof}
\begin{proposition}
Let $A$ be a closed densely defined operator in $\mathcal{H}$. If
$A$ is $k$-quasihyponormal then
$$A^{*k}\bigg( A^{*2}A^2-aA^*A+a^2I\bigg)A^k\geq 0\;\;\;\hbox{for all}\;\;a\in \mathbb{R}.$$
\end{proposition}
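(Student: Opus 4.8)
The plan is to reduce the operator inequality to an elementary statement about a real quadratic polynomial in $a$. Fix $x \in \mathcal{D}(A^{k+2})$, which is the natural domain on which the composite operator acts, and expand the quadratic form. Moving $A^{*k}$ back across the inner product onto $A^k x$ and using the definition of the higher powers, one finds that
$$\langle A^{*k}\big(A^{*2}A^2 - aA^*A + a^2 I\big)A^k x,\,x\rangle = \|A^{k+2}x\|^2 - a\,\|A^{k+1}x\|^2 + a^2\,\|A^k x\|^2.$$
The three terms arise, respectively, from $\langle A^{*2}A^2 (A^k x),\,A^k x\rangle = \|A^{2}A^{k} x\|^2$, from $\langle A^*A (A^k x),\,A^k x\rangle = \|A\,A^{k} x\|^2$, and from $\langle A^k x, A^k x\rangle$. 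Denoting the right-hand side by $Q(a)$, this is a real quadratic in $a$ with nonnegative leading coefficient $\|A^k x\|^2$, so the proposition reduces to showing $Q(a)\geq 0$ for every real $a$, equivalently that its discriminant is nonpositive.

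The heart of the matter is therefore the single scalar inequality
$$\|A^{k+1}x\|^2 \leq \|A^k x\|\,\|A^{k+2}x\| \qquad (x \in \mathcal{D}(A^{k+2})).$$
I would obtain this exactly as in the preceding lemma, but with the index shifted up by one. The trick is to apply the defining inequality of $k$-quasihyponormality, $\|A^*A^k y\| \leq \|A^{k+1}y\|$, not to $x$ but to the vector $y = Ax$, which is legitimate since $x \in \mathcal{D}(A^{k+2})$ forces $Ax \in \mathcal{D}(A^{k+1})$; this yields $\|A^*A^{k+1}x\| \leq \|A^{k+2}x\|$. Combining this with Cauchy--Schwarz, and using $\mathcal{D}(A) \subset \mathcal{D}(A^*)$ to justify passing $A$ to $A^*$ inside the inner product,
$$\|A^{k+1}x\|^2 = \langle A^*A^{k+1}x,\,A^k x\rangle \leq \|A^*A^{k+1}x\|\,\|A^k x\| \leq \|A^{k+2}x\|\,\|A^k x\|,$$
which is the claimed inequality.

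With this in hand the conclusion is immediate: the discriminant of $Q$ is $\|A^{k+1}x\|^4 - 4\|A^k x\|^2\|A^{k+2}x\|^2$, and the displayed inequality gives $\|A^{k+1}x\|^4 \leq \|A^k x\|^2\|A^{k+2}x\|^2$, so the discriminant is at most $-3\|A^k x\|^2\|A^{k+2}x\|^2 \leq 0$. Hence $Q(a) \geq 0$ for all real $a$, which is precisely the asserted positivity; the degenerate case $A^k x = 0$ forces $A^{k+1}x = A^{k+2}x = 0$, so $Q \equiv 0$ there. I expect the only real care to be taken is in the domain and adjoint bookkeeping for the unbounded operator $A$: one must check that every inner-product manipulation above stays within $\mathcal{D}(A^{k+2})$ and that the identities $\langle A^{*j}u,\,v\rangle = \langle u,\,A^j v\rangle$ are applied only to vectors in the appropriate domains. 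This is where closedness and dense definition of $A$ enter, and it is the main (if routine) obstacle rather than the algebra itself.
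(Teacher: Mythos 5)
Your proof is correct and follows essentially the same route as the paper: reduce the operator inequality to the scalar quadratic $Q(a)=\|A^{k+2}x\|^{2}-a\|A^{k+1}x\|^{2}+a^{2}\|A^{k}x\|^{2}$ and verify its discriminant is nonpositive via the Cauchy--Schwarz-type bound $\|A^{k+1}x\|^{2}\leq\|A^{k+2}x\|\,\|A^{k}x\|$. In fact you are more careful than the paper, which merely asserts that bound: your derivation (applying the defining inequality of $k$-quasihyponormality to $y=Ax$ to get $\|A^{*}A^{k+1}x\|\leq\|A^{k+2}x\|$) is precisely the justification the paper omits, since its Lemma~3.1 as stated only gives the inequality with the indices shifted down by one.
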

\begin{proof}
Let us suppose that $A$ is $k$-quasihyponormal. Then it follows that
the following relation holds:
$$\|A^*A^kx\|^2\leq \|A^{k+1}x\|^2\leq  \|A^{k+2}x\|\|A^{k}x\|.$$
This means that
\begin{eqnarray*}
\|A^*A^kx\|^2 \leq \|A^{k+2}x\|\|A^{k}x\| &\Longleftrightarrow&
4\|A^*A^kx\|^2 -4\|A^{k+2}x\|\|A^{k}x\|\leq 0\\&\Longleftrightarrow&
\|A^{k+2}x\|^2-2a\|A^{k+2}x\|^2+a^2\|A^kx\|^2\geq 0
\\&\Longleftrightarrow&A^{*k}\bigg( A^{*2}A^2-aA^*A+a^2I\bigg)A^k\geq 0 \;\;\;.
\end{eqnarray*}
This completes the proof of the proposition.
\end{proof}
\begin{proposition}
Let $A \in Op(\mathcal{H})$ be a $k$-quasihyponormal. if $A$ is
invertible then $A$ is hyponormal.
 \end{proposition}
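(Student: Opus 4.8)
The plan is to invert the defining inequality $\|A^*A^kx\|\le \|A^{k+1}x\|$ by a change of variable, exploiting that invertibility lets $A^k$ hit an arbitrary vector of $\mathcal{D}(A)$. Concretely, since $A$ is invertible, $A^{-1}$ is defined on all of $\mathcal{H}$ and maps $\mathcal{H}$ into $\mathcal{D}(A)$; in particular every power $A^{-j}$ with $j\ge 1$ takes values in $\mathcal{R}(A^{-1})\subset\mathcal{D}(A)$. Given an arbitrary $y\in\mathcal{D}(A)$, I would set $x=A^{-k}y$ and aim to show that the $k$-quasihyponormal estimate applied to this particular $x$ collapses exactly to the hyponormality estimate $\|A^*y\|\le\|Ay\|$.

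The key bookkeeping is to verify that $x=A^{-k}y$ actually lies in $\mathcal{D}(A^{k+1})$, so that the defining inequality may legitimately be applied to it. I would establish this by following the chain $A^jx=A^{-(k-j)}y$ for $j=0,1,\dots,k$: for each $j$ with $k-j\ge 1$ the vector $A^{-(k-j)}y$ lies in $\mathcal{R}(A^{-1})\subset\mathcal{D}(A)$, so one further application of $A$ is justified at every stage; at $j=k$ one lands on $A^kx=y$, which belongs to $\mathcal{D}(A)$ by hypothesis, so a final application gives $A^{k+1}x=Ay$ and confirms $x\in\mathcal{D}(A^{k+1})$. This chain simultaneously shows $A^kx=y\in\mathcal{D}(A)\subset\mathcal{D}(A^*)$, so that the quantity $A^*A^kx=A^*y$ is meaningful.

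With the domains in order, the conclusion is immediate: substituting $A^kx=y$ and $A^{k+1}x=Ay$ into $\|A^*A^kx\|\le\|A^{k+1}x\|$ yields $\|A^*y\|\le\|Ay\|$. Since $y\in\mathcal{D}(A)$ was arbitrary and the inclusion $\mathcal{D}(A)\subset\mathcal{D}(A^*)$ is already part of $k$-quasihyponormality, this is precisely the definition of hyponormality for unbounded operators. The only real obstacle is the domain verification of the second paragraph—ensuring that forming $A^{-k}y$ and reapplying $A$ the required number of times never leaves the relevant domains; once that is secured the inequality manipulation is trivial. I would also remark that for closed $A$ invertibility produces a bounded $A^{-1}$ by the closed graph theorem, but the argument uses only the purely algebraic fact $\mathcal{R}(A^{-1})\subset\mathcal{D}(A)$.
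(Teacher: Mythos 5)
Your argument is correct and is essentially the paper's own proof: both substitute $x=A^{-k}y$ into the defining inequality $\|A^*A^kx\|\le\|A^{k+1}x\|$ so that it collapses to $\|A^*y\|\le\|Ay\|$ for arbitrary $y\in\mathcal{D}(A)$. Your version merely spells out the domain verification $A^{-k}y\in\mathcal{D}(A^{k+1})$ that the paper asserts without detail.
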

 \begin{proof}
As $A$ is $k$-quasihyponormal,we have by definition that
$\mathcal{D}(A)\subset \mathcal{D}(A^*)$ and
$$\|A^*A^kx\|\leq \|A^{k+1}x\|\;\;\;\;\hbox{for all}\;\;x\in
\mathcal{D}(A^{k+1}).$$ Since $A$ is invertible with an everywhere
defined bounded inverse, we have for all $x \in \mathcal{D}(A):
A^{-k}x \in \mathcal{D}(A^{k+1}) $
$$\|A^*A^kA^{-k}x\|\leq
\|A^{k+1}A^{-k}x\|\;\;\;\;\hbox{for all}\;\;x\in
\mathcal{D}(A^{k+1}).$$ Hence we my write $$\|A^*x\| \leq \|Ax\|.$$
 \end{proof}
 \begin{proposition}
Let $A \in \mathcal{B}(\mathcal{H})$ and $B \in Op(\mathcal{H})$
such that $A$ and $B$ are hyponormal. If $BA^*\subseteq A^*B$ then
$AB$ is $k$- quasihyponormal.
\end{proposition}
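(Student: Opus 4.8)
The plan is to imitate the bounded computation of Proposition 2.2 (the case $BA^{*}=A^{*}B$), replacing each equality by the corresponding relation valid in the unbounded setting while tracking domains at every step. Concretely, to show that $AB$ is $k$-quasihyponormal in the sense of Definition 3.1, I must verify the domain inclusion $\mathcal{D}(AB)\subset\mathcal{D}\big((AB)^{*}\big)$ together with the estimate $\|(AB)^{*}(AB)^{k}x\|\le\|(AB)^{k+1}x\|$ for every $x\in\mathcal{D}\big((AB)^{k+1}\big)$.

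I would first dispose of the preliminaries. Since $A\in\mathcal{B}(\mathcal{H})$ is everywhere defined and bounded, $\mathcal{D}(AB)=\mathcal{D}(B)$ and the adjoint factorizes as $(AB)^{*}=B^{*}A^{*}$. Next I extract from the commutation hypothesis the working fact that $A^{*}$ leaves $\mathcal{D}(B)$ invariant and satisfies $A^{*}B=BA^{*}$ on $\mathcal{D}(B)$. Combined with the hyponormality of $B$, which gives $\mathcal{D}(B)\subset\mathcal{D}(B^{*})$, this yields $A^{*}x\in\mathcal{D}(B)\subset\mathcal{D}(B^{*})$ whenever $x\in\mathcal{D}(B)$, hence $\mathcal{D}(AB)=\mathcal{D}(B)\subset\mathcal{D}(B^{*}A^{*})=\mathcal{D}\big((AB)^{*}\big)$, as required.

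With these in hand, the main estimate runs exactly as in the bounded proof. Fix $x\in\mathcal{D}\big((AB)^{k+1}\big)$, so that $(AB)^{k}x\in\mathcal{D}(AB)=\mathcal{D}(B)$ and, by invariance, $A^{*}(AB)^{k}x\in\mathcal{D}(B)$. Then
\begin{align*}
\|(AB)^{*}(AB)^{k}x\| &= \|B^{*}A^{*}(AB)^{k}x\| \\
&\le \|BA^{*}(AB)^{k}x\| \\
&= \|A^{*}B(AB)^{k}x\| \\
&\le \|AB(AB)^{k}x\| \\
&= \|(AB)^{k+1}x\|,
\end{align*}
where the first line is the adjoint factorization, the second uses hyponormality of $B$ (legitimate precisely because $A^{*}(AB)^{k}x\in\mathcal{D}(B)$), the third is the commutation of $A^{*}$ with $B$ on $\mathcal{D}(B)$, and the fourth is hyponormality of the bounded operator $A$, valid for every vector.

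The analytic content is essentially trivial once the domains are controlled, so I expect the entire difficulty to be domain bookkeeping. The delicate point is justifying $A^{*}(AB)^{k}x\in\mathcal{D}(B)$, i.e.\ that $A^{*}$ genuinely maps $\mathcal{D}(B)$ into itself and commutes with $B$ there; this is what licenses the second and third lines of the chain. One must read the commutation inclusion in the correct direction, using it to conclude that $x\in\mathcal{D}(B)$ implies $A^{*}x\in\mathcal{D}(B)$ with $A^{*}Bx=BA^{*}x$. The opposite direction would leave $A^{*}(AB)^{k}x$ outside $\mathcal{D}(B)$ and the hyponormality step for $B$ would not apply. A secondary, routine check is that $(AB)^{k}$ and $(AB)^{k+1}$ possess the expected domains, which holds because $A$ is bounded and everywhere defined.
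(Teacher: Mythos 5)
Your chain of inequalities is exactly the paper's proof: the paper writes the same five lines
$\|(AB)^{*}(AB)^{k}x\| = \|B^{*}A^{*}(AB)^{k}x\| \le \|BA^{*}(AB)^{k}x\| = \|A^{*}B(AB)^{k}x\| \le \|AB(AB)^{k}x\| = \|(AB)^{k+1}x\|$
with no domain discussion whatsoever, so your added bookkeeping (the factorization $(AB)^{*}=B^{*}A^{*}$, valid because $A$ is bounded; the verification of $\mathcal{D}(AB)\subset\mathcal{D}\big((AB)^{*}\big)$; the check that $(AB)^{k}x\in\mathcal{D}(B)$) is a strict improvement in rigor over what the paper records.

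However, the one step you yourself flag as delicate is not actually delivered by the stated hypothesis. You claim that $BA^{*}\subseteq A^{*}B$ yields that $A^{*}$ maps $\mathcal{D}(B)$ into $\mathcal{D}(B)$. It does not: since $A^{*}$ is bounded and everywhere defined, $\mathcal{D}(A^{*}B)=\mathcal{D}(B)$ while $\mathcal{D}(BA^{*})=\{x: A^{*}x\in\mathcal{D}(B)\}$, so the inclusion $BA^{*}\subseteq A^{*}B$ says precisely that $A^{*}x\in\mathcal{D}(B)$ implies $x\in\mathcal{D}(B)$ (with equal actions there) --- the converse of what you need. The invariance $A^{*}\mathcal{D}(B)\subset\mathcal{D}(B)$, which underlies both your domain inclusion $\mathcal{D}(AB)\subset\mathcal{D}\big((AB)^{*}\big)$ and the legitimacy of applying the hyponormality of $B$ to the vector $A^{*}(AB)^{k}x$, is equivalent to the \emph{reverse} inclusion $A^{*}B\subseteq BA^{*}$. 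So either the hypothesis must be read (or restated) as $A^{*}B\subseteq BA^{*}$, or your second and third lines are unjustified. The paper's own proof is silent on all of this and therefore carries the same defect; but since you explicitly rest your argument on the invariance claim, you should either adopt the corrected hypothesis openly or supply a separate argument for why $A^{*}$ preserves $\mathcal{D}(B)$.
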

\begin{proof}
     Let $x \in \mathcal{D}((AB)^{k+1})$,
     \begin{eqnarray*}
     \left\|(AB)^{*}(AB)^kx\right\| &=&
     \left\|B^{*}A^{*}(AB)^kx\right\|\\&
                                  \leq& \left\|BA^{*}(AB)^kx\right\| ( \hbox{since}\;\;B\;\hbox{is hyponormal})\\
                                  & \leq& \left\|A^{*}B(AB)^kx\right\|\\
                                    &\leq& \left\|AB(AB)^kx\right\| ( \hbox{since}\;\;A\;\hbox{is hyponormal})\\
                                     &\leq&
                                     \left\|\left(AB\right)^{k+1}x\right\|\end{eqnarray*}
                                     we even have evidence to quasi-hyponormal.
\end{proof}
\begin{proposition}
Let $A \in \mathcal{B}(\mathcal{H})$ and $B \in Op(\mathcal{H})$
 such that $A$ is normal and $AB$ is
$k$-quasihyponormal. If $B(AA^*)\subseteq (AA^*B)$ then $BA$ is
$k$-quasihyponormal.
\end{proposition}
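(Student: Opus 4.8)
The plan is to transport the $k$-quasihyponormality of $AB$ to $BA$ by conjugating with the unitary part of $A$, exactly as in the bounded Theorem 2.7, but keeping track of domains. Since $A\in\mathcal{B}(\mathcal{H})$ is normal I would first write its polar decomposition $A=U|A|=|A|U$ with $U$ unitary, the commutation $U|A|=|A|U$ being precisely what normality provides. As $A$ is bounded, $AA^*=|A|^2$ is everywhere defined, so the hypothesis $B(AA^*)\subseteq(AA^*B)$ reads $B|A|^2\subseteq|A|^2B$. The first task is to pass from commutation with $|A|^2$ to commutation with $|A|$, namely to establish the inclusion $B|A|\subseteq|A|B$; granting this, and using $|A|U=U|A|$, a direct domain computation gives the operator inclusion
\[
BA \;=\; B|A|U \;\subseteq\; |A|BU \;=\; U^*(AB)U \;=:\; C .
\]

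The point I would stress is that this mere inclusion $BA\subseteq C$ already suffices, so that no two-sided (domain) identity is required. Indeed, $C=U^*(AB)U$ is $k$-quasihyponormal because the defining condition is manifestly invariant under unitary conjugation: for $z\in\mathcal{D}(C^{k+1})$ one has $Uz\in\mathcal{D}((AB)^{k+1})$ and $\|C^*C^kz\|=\|(AB)^*(AB)^kUz\|\leq\|(AB)^{k+1}Uz\|=\|C^{k+1}z\|$, and likewise $\mathcal{D}(C)\subseteq\mathcal{D}(C^*)$ transfers from $\mathcal{D}(AB)\subseteq\mathcal{D}((AB)^*)$.

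It then remains to observe that a densely defined restriction of a $k$-quasihyponormal operator is again $k$-quasihyponormal. From $BA\subseteq C$ one gets $(BA)^m\subseteq C^m$ for every $m$ and $C^*\subseteq(BA)^*$, whence $\mathcal{D}(BA)\subseteq\mathcal{D}(C)\subseteq\mathcal{D}(C^*)\subseteq\mathcal{D}((BA)^*)$. For $x\in\mathcal{D}((BA)^{k+1})$ the inclusions give $(BA)^{k+1}x=C^{k+1}x$ and $(BA)^kx=C^kx\in\mathcal{D}(C)\subseteq\mathcal{D}(C^*)$, so that $(BA)^*(BA)^kx=C^*C^kx$; the $k$-quasihyponormality of $C$ now yields $\|(BA)^*(BA)^kx\|=\|C^*C^kx\|\leq\|C^{k+1}x\|=\|(BA)^{k+1}x\|$, which is exactly the required estimate.

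The main obstacle is the single analytic step $B|A|^2\subseteq|A|^2B\Rightarrow B|A|\subseteq|A|B$. For bounded $B$ (as in Theorem 2.7) this is immediate from the continuous functional calculus, but for the unbounded closed $B$ of the present statement it requires genuine care: one approximates $\sqrt{\,\cdot\,}$ uniformly on the spectrum of $|A|^2$ by polynomials $p_n$, notes that $B$ commutes with every power of $|A|^2$ and hence with each $p_n(|A|^2)$, and then passes to the limit using the closedness of $B$ together with careful domain tracking. Securing this commutation, and with it the inclusion $BA\subseteq C$, is where the whole argument really lives; once it is in place, the transfer of $k$-quasihyponormality through the inclusion is purely formal.
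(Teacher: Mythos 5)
Your proof follows the same route as the paper's: polar decomposition $A=U|A|=|A|U$ with $U$ unitary, the deduction of $B|A|\subseteq |A|B$ from the hypothesis $B|A|^2\subseteq |A|^2B$, the identification of $BA$ with (a restriction of) $U^*(AB)U$, and the transfer of $k$-quasihyponormality through unitary conjugation. The difference is one of care, not of strategy: the paper writes $U^*ABU=BA$ as an equality and passes from $P^2B=BP^2$ to $PB=BP$ in a single unexplained arrow, whereas you correctly reduce everything to the operator inclusion $BA\subseteq U^*(AB)U$ and supply the (sound) argument that a densely defined restriction of a $k$-quasihyponormal operator inherits the property. The one step you rightly single out --- $B|A|^2\subseteq|A|^2B\Rightarrow B|A|\subseteq|A|B$ --- is exactly where the paper's proof is silent, and your polynomial-approximation argument for it requires $B$ to be closed (or at least closable), a hypothesis the statement does not impose, since $Op(\mathcal{H})$ here means only densely defined; you should either add that hypothesis explicitly or note that the implication can fail without it. In short, your write-up is essentially the paper's proof done carefully, and it exposes rather than hides the point at which both arguments need an assumption the statement omits.
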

\begin{proof}

Since $A$ is normal, we have $A=PU=UP$ with $P\geq 0$ and $U$
unitary.\par \vskip 0.2 cm \noindent$$B(AA^*)=(AA^*B)\Rightarrow
P^2B=BP^2\Longrightarrow PB=BP.$$ A simple computation shows that
$$U^*ABU=BA.$$
So
\begin{eqnarray*}\left\|\left(BA\right)^*\left(BA\right)^kx\right\|&=&\left\|U^*\left(AB\right)^*UU^*\left(AB\right)^kUx\right\|\\&=&\left\|U^*\left(AB\right)^*\left(AB\right)^kUx\right\|\\&\leq&
\left\|U^*\left(AB\right)^{k+1}Ux\right\|\\
&\leq& \left\|\big(U^*ABU\big)^{k+1}x\right\|\\&=&
\left\|\left(BA\right)^{k+1}x\right\|,\end{eqnarray*} This implies
that
    $\left(BA\right)$ is $k$-quasi hyponormal.

\end{proof}
\begin{proposition}

Let $A\in \mathcal{B}(\mathcal{H})$ and $B:D(B)\subset
\mathcal{H}\longrightarrow \mathcal{H}$ be closed densely defined
operator such that  $A$ and $B$ are $k$-quasihyponormal.  If
$A^*A^kB\subseteq BA^*A^k$ and $A^jB^j\subseteq (AB)^j$  for $j\in
\{k,k+1\}$,then $AB$ is $k$-quashyponormal.
\end{proposition}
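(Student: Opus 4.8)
The plan is to transcribe the bounded computation of Proposition~2.5 into the unbounded setting, replacing each equality by the corresponding operator inclusion and carefully tracking domains. Recall that to conclude $AB$ is $k$-quasihyponormal I must establish both the domain inclusion $\mathcal{D}(AB)\subset\mathcal{D}((AB)^*)$ and the inequality $\|(AB)^*(AB)^kx\|\le\|(AB)^{k+1}x\|$ for every $x\in\mathcal{D}((AB)^{k+1})$.

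First I would set up the algebraic ingredients. Because $A\in\mathcal{B}(\mathcal{H})$ is bounded, the general inclusion recalled in the introduction becomes an equality, so $(AB)^*=B^*A^*$ with $A^*$ everywhere defined; this also gives $\mathcal{D}(AB)=\mathcal{D}(B)$. Next I would promote the single commutation hypothesis $A^*A^kB\subseteq BA^*A^k$ to its iterates $A^*A^kB^j\subseteq B^jA^*A^k$ for all $j\ge 1$, by a short induction using that operator inclusions are preserved under left and right composition. Combined with the factorization hypotheses $A^kB^k\subseteq(AB)^k$ and $A^{k+1}B^{k+1}\subseteq(AB)^{k+1}$, these are exactly the relations that let me rewrite the various products on the relevant domains.

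With these in hand, the estimate runs, for fixed $x\in\mathcal{D}((AB)^{k+1})$, as
\[ \|(AB)^*(AB)^kx\|=\|B^*A^*A^kB^kx\|=\|B^*B^k(A^*A^kx)\|\le\|B^{k+1}(A^*A^kx)\|=\|A^*A^k(B^{k+1}x)\|\le\|A^{k+1}B^{k+1}x\|=\|(AB)^{k+1}x\|, \]
where the first equality uses $(AB)^*=B^*A^*$ and $(AB)^kx=A^kB^kx$, the second uses $A^*A^kB^k\subseteq B^kA^*A^k$, the first inequality is the $k$-quasihyponormality of $B$ applied to $A^*A^kx$, the third equality uses $A^*A^kB^{k+1}\subseteq B^{k+1}A^*A^k$, the second inequality is the $k$-quasihyponormality of $A$ applied to $B^{k+1}x$ (a vector automatically lying in $\mathcal{D}(A^{k+1})=\mathcal{H}$ since $A$ is bounded), and the final equality uses $A^{k+1}B^{k+1}x=(AB)^{k+1}x$.

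The step I expect to be the genuine obstacle is the domain bookkeeping rather than the algebra, since in the bounded case every operator is defined everywhere and all of this is vacuous. Concretely, starting from $x\in\mathcal{D}((AB)^{k+1})$ I must verify that each intermediate vector lies in the domain of the operator applied next — most delicately that $A^*A^kx\in\mathcal{D}(B^{k+1})$, which is precisely what licenses invoking the $k$-quasihyponormal inequality for $B$. The hypotheses $A^*A^kB^j\subseteq B^jA^*A^k$ and $A^jB^j\subseteq(AB)^j$ are designed to propagate these memberships down the chain, and I would check that they suffice at every link. Separately, confirming the adjoint-domain inclusion $\mathcal{D}(AB)=\mathcal{D}(B)\subset\mathcal{D}(B^*A^*)=\mathcal{D}((AB)^*)$ may require additional care about how $A^*$ interacts with $\mathcal{D}(B^*)$, and is the point I would scrutinize most closely.
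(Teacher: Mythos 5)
Your chain of (in)equalities is exactly the paper's proof: it uses $(AB)^*=B^*A^*$ (valid since $A$ is bounded and $B$ is densely defined), iterates $A^*A^kB\subseteq BA^*A^k$ to $A^*A^kB^j\subseteq B^jA^*A^k$, applies the $k$-quasihyponormality of $B$ and then of $A$, and closes with $A^{k+1}B^{k+1}\subseteq (AB)^{k+1}$. The domain questions you flag at the end (e.g.\ whether $x\in\mathcal{D}((AB)^{k+1})$ forces $x\in\mathcal{D}(B^{k+1})$, and the inclusion $\mathcal{D}(AB)\subset\mathcal{D}((AB)^*)$) are legitimate, but the paper passes over them silently, so your proposal matches the published argument.
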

\begin{proof}
Since $A\in \mathcal{B}(\mathcal{H})$ and $B$ is  closed densely
defined, it is well known that $$(AB)^*=B^*A^*.$$ Hence we may write
\begin{eqnarray*}\|(AB)^*(AB)^kx\|&=&\|B^*A^*A^kB^kx\|\\&=&\|B^*B^kA^*A^kx
\|\\&\leq&\| B^{k+1}A^*A^kx \|\;\;\;(\hbox {since}\;\;B\;\;\hbox{is
$k$-quasihyponormal}\\&\leq&\|A^*A^kB^{k+1}\|\\&\leq&\|
A^{k+1}B^{k+1}x\|\quad(\hbox{since}\;A\;\hbox{is}\;k-\hbox{
quasihyponormal})\\&\leq& \|(AB)^{k+1}x\|.
\end{eqnarray*}
This completes the proof.
\end{proof}

\begin{proposition}
Let $A\in Op(\mathcal{H})$ is normal and $B\in
\mathcal{B}(\mathcal{H}).$ \par \vskip 0.2 cm \noindent (1) \;If
$AB$ is $k$-paranormal and $A^*AB\subseteq BA^*A$, then $BA$ is
$k$-paranormal. \par \vskip 0.2 cm \noindent (2) \;If $AB$ is
$k-*$-paranormal and $A^*AB\subseteq BA^*A$, then $BA$ is
$k-*$-paranormal.
\end{proposition}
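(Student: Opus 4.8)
The plan is to follow the same reduction used in the preceding propositions: exhibit $BA$ as a unitary conjugate of $AB$ and then transport the defining inequalities of $k$-paranormality and $k$-$*$-paranormality across this conjugation. First I would invoke the polar decomposition of the normal operator $A$, writing $A=UP=PU$ with $U$ unitary and $P=|A|=(A^*A)^{1/2}\geq 0$, so that $A^*A=AA^*=P^2$. The hypothesis $A^*AB\subseteq BA^*A$ then reads $P^2B\subseteq BP^2$, and since $B$ is bounded and everywhere defined this says that $B$ commutes with $P^2$; passing through the spectral theorem (functional calculus for the positive self-adjoint operator $P$) I would upgrade this to $PB=BP$, exactly as in the previous proposition. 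Combining $PB=BP$ with the normality relation $PU=UP$ yields the key identity
$$U^*(AB)U=U^*UPBU=PBU=BPU=BUP=BA,$$
and, by inserting $UU^*=I$ and telescoping, its powers $(BA)^k=U^*(AB)^kU$, the domains matching because $U$ is a bounded bijection.

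For part (1), I would fix a unit vector $x\in\mathcal{D}((BA)^k)$ and set $y=Ux$, which is again a unit vector and lies in $\mathcal{D}((AB)^k)$ by the identity above. Using that $U$ is unitary (so that $\|U^*z\|=\|z\|$), I would compute $\|(BA)x\|=\|(AB)y\|$ and $\|(BA)^kx\|=\|(AB)^ky\|$; the $k$-paranormality of $AB$ applied to the unit vector $y$ then gives $\|(BA)x\|^k=\|(AB)y\|^k\leq\|(AB)^ky\|=\|(BA)^kx\|$, so $BA$ is $k$-paranormal. For part (2) the same conjugation produces $(BA)^*=U^*(AB)^*U$, whence the inclusion $\mathcal{D}(AB)\subset\mathcal{D}((AB)^*)$ transfers to $\mathcal{D}(BA)\subset\mathcal{D}((BA)^*)$; replacing $\|(BA)x\|$ by $\|(BA)^*x\|=\|(AB)^*y\|$ in the previous chain and using the $k$-$*$-paranormality of $AB$ closes the argument.

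The norm identities and the polar form are immediate, so the genuine difficulty is the unbounded-operator bookkeeping. The main obstacle is twofold: justifying the passage $P^2B\subseteq BP^2\Rightarrow PB=BP$ rigorously through the spectral measure of $P$, and verifying that the operator identity $(BA)^k=U^*(AB)^kU$ holds with exactly equal domains, so that $x\in\mathcal{D}((BA)^k)$ is genuinely equivalent to $Ux\in\mathcal{D}((AB)^k)$. Once these domain equalities are secured, the unit-vector formulation of $k$-paranormality (and of $k$-$*$-paranormality) transfers verbatim under the conjugation by $U$.
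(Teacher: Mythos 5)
Your proposal is correct and follows essentially the same route as the paper: polar decomposition $A=U|A|=|A|U$ with $U$ unitary, the commutation $|A|B=B|A|$ extracted from $A^*AB\subseteq BA^*A$, the identity $BA=U^*(AB)U$, and transport of the unit-vector inequality through the unitary conjugation. Your write-up is in fact slightly more careful than the paper's (which states some of the norm equalities as inequalities and does not discuss the domain bookkeeping or the passage from $|A|^2B=B|A|^2$ to $|A|B=B|A|$), but the underlying argument is identical.
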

\begin{proof} (1)\;
Using the normality of $A=U|A|=|A|U$ ( $U$ unitary) and the fact
that $$A^*AB\subseteq BA^*A$$ we see that $BA=U^*ABU.$\par \vskip
0.2 cm \noindent Now we have
\begin{eqnarray*}
\|BAx\|^k&=&\|U^*ABUx\|^k\\&\leq& \|ABUx\|^k \\&\leq&
\|(AB)^kUx\|\\&\leq&\|(U^*ABU)^kx\|\\&=&\|(BA)^kx\|.
\end{eqnarray*}
(2)\;By similar  argument.
\end{proof}

\end{document}